\newcommand*{\rom}[1]{\expandafter\@slowromancap\romannumeral #1@}
\newcommand\EnumPrefix{}
\newlist{senenum}{enumerate}{10}
\setlist[senenum]{label=\arabic*.,ref=\EnumPrefix,leftmargin=*}
\newcommand{\remove}[1]{}
\newcommand{\bmat}[1]{\begin{bmatrix}#1\end{bmatrix}}
\newcommand{\Prob}[1]{\mathbb{P} \left( #1 \right)}
\newcommand{\Exp}[1]{\mathbb{E} \left[ #1 \right]}
\newcommand{\Expx}[1]{\mathbb{E}_{x_0 \sim \mathcal{D}} \left[ #1 \right]}
\newcommand{\Expxt}[1]{\mathbb{E}_{x_0 \sim \mathcal{D}, \omega_0\sim \rho} \left[ #1 \right]}
\newcommand{\1}[1]{\mathbf{1}_{#1}}
\newcommand{\eps}{\varepsilon}
\newcommand{\Nor}{\mathcal{N}}
\newcommand{\N}{\mathcal{N}}
\newcommand{\R}{\mathbb{R}}
\newcommand{\tr}[1]{\textup{tr}\! \left( #1 \right)}
\newcommand{\diag}[1]{\textup{diag}\! \left( #1 \right)}
\newcommand{\vect}[1]{\textup{vec}\! \left( #1 \right)}
\newcommand{\sumtinf}{\sum_{t=0}^\infty}
\newcommand{\EPnext}{\mathcal{E}_i(P)}
\newcommand{\EPKnext}{\mathcal{E}_i(P^{\hat{K}})}
\newcommand{\Enext}[1]{\mathcal{E}_i(#1)}
\newcommand{\EPhatnext}{\hat{\mathcal{E}}(P^{\hat{K}})}
\newcommand{\deltaK}{\Delta \hat{K}}
\newcommand{\deltaKi}{\Delta K_i}
\title[Policy Learning of MDPs with Mixed Continuous/Discrete Variables]{Policy Learning of MDPs with Mixed Continuous/Discrete Variables: \\A Case Study on  Model-Free Control of Markovian Jump Systems
}
\author{%
 \Name{Joao P. {Jansch-Porto}} \Email{janschp2@illinois.edu}\\
 \Name{Bin Hu} \Email{binhu7@illinois.edu}\\
 \Name{Geir E. Dullerud} \Email{dullerud@illinois.edu}\\
 \addr Coordinated Science Laboratory, University of Illinois at Urbana-Champaign, Urbana, Illinois 61801 USA%
}
\begin{document}

\maketitle

\begin{abstract}

Markovian jump linear systems (MJLS) are an important class of dynamical systems that arise in many control applications. 
In this paper, we introduce the problem of controlling unknown (discrete-time) MJLS as a new benchmark for policy-based reinforcement learning of Markov decision processes (MDPs) with mixed continuous/discrete state variables. 
Compared with the traditional linear quadratic regulator (LQR), our proposed problem leads to a special hybrid MDP (with mixed continuous and discrete variables) and poses significant new challenges due to the appearance of an underlying Markov jump parameter governing the mode of the system dynamics. 
Specifically, the state of a MJLS does not form a Markov chain and hence one cannot study the MJLS control problem as a MDP with solely continuous state variable. 
However, one can augment the state and the jump parameter to obtain a MDP with a mixed continuous/discrete state space.
We discuss how control theory sheds light on the policy parameterization of such hybrid MDPs. 
Then we modify the widely used natural policy gradient method to directly learn the optimal state feedback control policy for MJLS without identifying either the system dynamics or the transition probability of the switching parameter. 
We implement the (data-driven) natural policy gradient method on different MJLS examples. 
Our simulation results suggest that the natural gradient method can efficiently learn the optimal controller for MJLS with unknown dynamics. 

\end{abstract}

\section{Introduction}
\label{sec:intro}

Reinforcement learning (RL)~\citep{sutton2018reinforcement} provides a powerful framework for solving Markov decision process (MDP) problems.
Although deep RL has achieved promising empirical successes in a variety of applications~\citep{schulman2015high, levine2016end}, how to choose RL algorithms \citep{duan2016benchmarking, kakade2002natural, schulman2015trust, peters2008natural, schulman2017proximal} for a specific task is still not fully understood~\citep{henderson2018deep,rajeswaran2017towards}.
This motivates many recent research efforts on understanding the performances of RL algorithms on simplified benchmarks. 
For example, many control applications lead to MDP formulations with continuous state/action spaces, and hence
the classic Linear Quadratic Regulator (LQR) problem has been revisited as a benchmark for understanding the performance of various model-free or model-based RL algorithms on such MDPs~\citep{pmlr-v80-fazel18a,dean2017sample,malik2018derivative,tu2018gap,dean2018regret,abbasi2011regret,abbasi2018regret,yang2019global}.

Another important class of MDPs involve mixed continuous/discrete state variables \citep{boyan2001exact,toussaint2006probabilistic,guestrin2004solving}.
In this paper, our key point is that the problem of controlling unknown (discrete-time) Markov Jump Linear Systems (MJLS) \citep{costa2006discrete} provides  a simple meaningful benchmark for understanding the performances of policy-based RL algorithms
on MDPs with mixed continuous/discrete variables.
MJLS is an important class of dynamical systems that find many applications in control~\citep{bar1993estimation,fox2011tsp,hamsa2016cdc,Pavlovic2000LearningSL,sworder1999estimation,varga2013ijrnc}, and machine learning \citep{hu2017unified,hu2019characterizing}.
Notice that the state/input matrices of a MJLS are functions of a jump parameter that is typically sampled from a Markov chain with discrete state variables. 
Understanding the performance of various RL methods on the MJLS control problem  can bring many useful insights.
An important fact is that the state of a MJLS does not form a Markov chain and hence one cannot study the MJLS control problem as a MDP with solely continuous state variable. 
However, if one augments the state and the jump parameter together, a MDP with a mixed continuous/discrete state space is naturally obtained.
Therefore, we believe the optimal control of unknown MJLS is a meaningful benchmark for further understanding of mixed MDPs.

Although the MDP formulation for MJLSs involves mixed continuous/discrete state variables, our study demonstrates that we can still modify policy-based RL algorithms, such as REINFORCE \citep{williams1992simple,sutton2000policy}, to efficiently solve this problem.
The key here is to parameterize the control policy based on MJLS control theory.
Recently, model-based policy optimization methods have been shown to provably converge to the global optimal policy for MJLS control~\citep{joaoACC}. 
In this paper, we discuss how to efficiently implement these methods in a data-driven manner.
We implement the model-free natural policy gradient (NPG) method on various MJLS examples. 
Our simulation results suggest that the NPG method with the REINFORCE policy gradient estimator and a simple average baseline can learn the optimal control for an MJLS without identifying either the system dynamics or the transition probability of the jump parameter. 
This confirms that policy optimization may provide a promising solution for controlling unknown jump systems. 
It is our hope that our study serves a first step towards more understanding of RL algorithms on MDPs with mixed continuous/discrete variables.

\paragraph{Related work.}
Some previous work has studied how to apply RL methods to MJLSs with known state/input matrices and unknown jump parameter distribution~\citep{costa2002monte, beirigo2018online}. 
In our paper, both state/input matrices and the transition probability of the jump parameter are assumed to be unknown.
In addition, the continuous-time setup has also been investigated recently~\citep{he2019reinforcement}.
Our paper considers the standard discrete-time formulation of MJLSs.




\section{Background and Preliminaries}
\label{sec:background}

\subsection{Notation}
We denote the set of real numbers by \(\R\).
Let \(Z\) be a square matrix, and we use the notation \(Z^T\), \( \|Z\| \),  \(\tr{Z}\), \(\sigma_{\min}(Z)\) to denote its transpose, spectral norm, trace, and minimum singular value, respectively. 
We indicate positive definite matrices by \(Z\succ 0\).
Given matrices \(\{D_i\}_{i = 1}^m\),  let \( \diag{D_1, \ldots, D_m}\) denote the block diagonal matrix whose $(i,i)$-th block is $D_i$. 
An identity matrix of dimension \(n\) is denoted by \(I_n\).
We use \(\otimes\) to denote the Kronecker product, and \(\vect{X}\) to denote the vectorization of the matrix \(X\) formed by stacking the columns of \(X\) into a single column.
We use \( \hat{e}_i \) to denote the canonical basis vector in \( \R^n \), where the only nonzero entry is the index \(i\).
The normal distribution with mean \(m \in \R^n\) and covariance \(\Lambda \in \R^{n\times n}\) is denoted by \(\Nor(m, \Lambda)\). 

\subsection{Markovian Jump Linear Systems}
\label{sec:MJLSreview}
A Markovian jump linear system is governed by the discrete-time state-space model
\begin{equation} \label{eq:ltv}
x_{t+1} = A_{\omega_t} x_t + B_{\omega_t} u_t+e_t, \text{ with } x_0\sim\mathcal{D} \text{ and } e_t\sim\Nor(0, \eps^2 I),
\end{equation}
where \( x_t \in \R^d\) and \(u_t \in \R^k \) correspond to the state and control action at time \(t\in\mathbb{N}_0\), respectively.
The matrices \( A_{\omega_t} \in \R^{d\times d}\) and \(B_{\omega_t} \in \R^{d\times k} \) depend on a jump parameter $\omega_t$, which is sampled from a Markov chain with a discrete state space \( \Omega:=\{1, \ldots, n_s \} \). 
Hence we have \(A_{\omega_t}\in\{A_i\}_{i\in\Omega}\) and \(B_{\omega_t}\in\{B_i\}_{i\in\Omega}\).
Denote the transition probabilities and initial distribution of $\omega_t$ as \(p_{ij} = \Prob{\omega_{t+1} = j | \omega_t = i}\) and \(\rho = \bmat{\rho_1\! & \cdots & \!\rho_{n_s}}^T\).
We have  \(\sum_{j=1}^{n_s} p_{ij} = 1\), and \( \sum_{i\in\Omega} \rho_i = 1 \).

In this paper, we are interested in minimizing the following discounted quadratic cost
\begin{equation} \label{eq:switched_cost}
C = \Expxt{\sumtinf \gamma^t(x_t^T Q_{\omega_t} x_t + u_t^T R_{\omega_t} u_t)},
\end{equation}
where \(Q_{\omega_t} \succ 0\), \(R_{\omega_t} \succ 0\) and \(\gamma \in (0, 1)\).
When the model information is available, the above MJLS LQR problem can be solved using standard Algebraic Riccati Equation (ARE) techniques~\citep{fragoso}. Specifically, 
let \( \{ P_i \}_{i\in\Omega} \) be the positive definite solution to the following coupled AREs:
\begin{equation}\label{eq:markov_riccati}
P_i = Q_i + \gamma A_i^T \EPnext A_i - \gamma^2 A_i^T \EPnext B_i \left( R_i +\gamma B_i^T \EPnext B_i \right)^{-1} B_i^T \EPnext A_i,
\end{equation}
where $\mathcal{E}_i(P) \coloneqq \Exp{ P_{\omega_{t+1}} \middle| \omega_t = i } = \sum_{j = 1}^{n_s} p_{ij} P_j$.
It is known that the optimal cost can be achieved using a state-feedback controller \(u_t = -K^*_{\omega_t} x_t\) where
\(
K^*_i =\gamma\left( R_i + \gamma B_i^T \EPnext B_i \right)^{-1} B_i^T \EPnext A_i.
\)
In this paper, we are interested in model-free learning of $K_i^*$ for the case where the model parameters $A_i$, $B_i$, $Q_i$, $R_i$, and $p_{ij}$ are unknown.

\subsection{A Brief Review of Policy Learning for LTI Systems}
\label{sec:LQRreview}
Here we briefly review model-free policy learning  for LTI systems. 
LTI systems are a special case of MJLS, where $\Omega = \{1\}$.
When applying policy-based RL methods for LTI systems, one first needs to specify the policy parameterization.
Since the state and action spaces are continuous, it is quite natural to adopt the linear Gaussian policy $u_t\sim \N(-Kx_t,\sigma^2 I)$, where $K$ and $\sigma$ are the parameters to be learned from data.
Then, it is straightforward to apply REINFORCE or natural policy gradient to update $(K,\sigma)$.
In general, it is difficult to obtain finite sample guarantees for REINFORCE and its variants. 
In \cite{pmlr-v80-fazel18a}, it is shown that the population dynamics of the NPG method has linear convergence to the optimal policy if a stabilizing initial policy is used. 
In addition, the authors also present a finite sample analysis of the model-free zeroth-order optimization~\citep{conn2009introduction,nesterov2017random} implementation of the NPG method. 
Notice that zeroth-order optimization (or evolutionary strategies) does not require a stochastic policy for exploration and hence the authors consider a deterministic policy. 
Consequently, their finite sample analysis cannot be directly extended for REINFORCE.
Nevertheless, it is expected that REINFORCE will work for the LTI problem as long as the gradient estimations are reasonably close to the true gradient.

\section{Policy Learning for MJLS}
\label{sec:pg}

Now we recast the MJLS LQR problem within the RL framework. This formulation involves a MDP with mixed continuous/discrete state variables. 
For the MJLS \eqref{eq:ltv}, the system state $\{x_t\}$ itself does not form a Markov chain anymore. 
However, if we augment $(x_t, \omega_t)$ as the new state, we will obtain a hybrid MDP with mixed continuous/discrete state variables. 
Now, the full state space is the product $\R^{n_x}\times \Omega$, but action space is still $\R^{n_u}$. 
The joint state transition model is specified by both the transition probability $\{p_{ij}\}$ and the MJLS model \eqref{eq:ltv}. 

The above MDP adopts a model-based dynamic programming solution which is summarized as the ARE approach reviewed in Section \ref{sec:MJLSreview}. The model-based approach requires knowing the model parameters $(A_i, B_i, Q_i, R_i, p_{ij})$ in advance.
Alternatively, when the model is unknown, one can apply model-free RL algorithms to solve this problem.
This makes the MJLS LQR problem arguably  the most basic benchmark for RL with hybrid MDPs. Understanding the performance of RL algorithms on this benchmark may shed light on how to solve more complicated hybrid MDPs. 
In this paper, we  focus on applying policy-based RL methods for solving such hybrid MDPs.

\subsection{Policy Parameterization and Optimization Landscape}
To apply policy optimization for the above MJLS MDP, we need to confine the search to some certain class of policies. 
Recently the important role of the policy representation has been recognized. 
For hybrid MDPs. there are multiple choices for policy parameterization.
One choice is to adopt a neural network structure where both $x_t$ and $\omega_t$ are fed as inputs. 
However, the optimization landscape for such a neural network parameterization is unclear. 
Another choice is based on optimal control theory for MJLS. 
Since we know the the optimal cost for the MJLS LQR problem can be achieved by a control law in the form of $u_t=-K_{\omega_t}x_t$, it is reasonable to  restrict the policy search within the class of state feedback controllers in the form of $u_t\sim \N(-K_{\omega_t} x_t, \sigma^2 I)$.
Specifically, we can set \(\hat{K} = \bmat{K_1 & \cdots & K_{n_s} }\), where \(K_i\) is the feedback gain for mode \(i\). 
With this notation, we obtain a policy optimization problem whose decision variables are $\hat{K}$ and $\sigma$.

Our policy parameterization can be thought as a mixture of continuous Gaussian policy and discrete look-up tables. 
For each of system modes, we train a corresponding linear policy specified by $K_i$. 
Eventually there are $n_s$ different linear policies stored in a look-up table for various possible values of $\omega_t$. One advantage of our parameterization is that it becomes clear that the cost function $C(\hat K, \sigma)$ only has one stationary point which is the global minimum for the MJLS LQR problem. 
To see this, we first write down an analytical formula for \(\nabla C(\hat{K}, \sigma)\).
Let $P_i^{\hat{K}}$ denote the solution to the coupled Lyapunov equations:
\begin{equation}\label{eq:lyap_markov}
P^{\hat{K}}_i = Q_i + K_i^T R_i K_i + \gamma\left( A_i - B_i K_i \right)^T \EPKnext \left( A_i - B_i K_i \right), \text{ for } i \in \Omega.
\end{equation}
Then the cost~\eqref{eq:switched_cost} subject to the system dynamics ~\eqref{eq:ltv} and the Gaussian policy $u_t\sim \N(-K_{\omega_t} x_t, \sigma^2 I)$ can be calculated as
\begin{equation} \label{eq:markov_cost}
C(\hat K, \sigma) = \Expx{ \sum_{i\in\Omega } \rho_i \left(x_0^T  P_i^{\hat{K}} x_0 + z_i\right)},
\end{equation}
where $z_i$ is solved from the following linear equation\footnote{It is well-known that this equation has a unique solution due to the properties of the transition probability matrix.}:
 \begin{equation} \label{eq:z_i}
 z_i = \sigma^2 \tr{R_i + \gamma B_i^T \EPnext B_i} + \gamma\eps^2 \tr{\EPnext} + \gamma \sum_{j \in \Omega} p_{ij} z_j 
 \end{equation}
Denote \(X_i(t) \coloneqq \Exp{x_t x_t^T \1{\omega_t = i}}\). Notice that \(X_i(t)\) can be recursively determined as
\begin{equation}\label{eq:X_i}
    X_j(t+1)=\sum_{i\in \Omega} p_{ij}\left((A_i-B_i K_i) X_i(t) (A_i-B_i K_i)^T +(\sigma^2 B_i B_i^T +\eps^2 I) \1{\omega_t = i}\right)
\end{equation}
Then we can calculate the policy gradient $\nabla C(\hat{K}, \sigma)$ using the following explicit formula.

\begin{lemma} \label{lemma:policy_grad}
Given $\hat{K}$ stabilizing the scaled system $x_{t+1}=\sqrt{\gamma}(A_{\omega_t}-B_{\omega_t} K_{\omega_t})x_t$ in the mean square sense and $\sigma \geq 0$, the gradient of~\eqref{eq:markov_cost} with respect to control gain \(\hat{K}\) and noise level $\sigma$ is given as
\begin{equation}\label{eq:exact_grad}
\nabla C(\hat{K}, \sigma) = \bmat{\vect{2\bmat{L_1(\hat{K}) & L_2(\hat{K}) & \cdots & L_{n_s}(\hat{K})} \chi_{\hat{K}}} \\ \frac{\sigma}{1-\gamma}\sum_{j = 1}^{n_s} \rho_i \,\, \textup{tr}\big(R_i + \gamma B^T_i \EPKnext B_i\big)} = \bmat{\vect{F_K} \\ F_\sigma}
\end{equation}
where \(L_i(\hat{K}) = \big(R_i + \gamma B^T_i \EPKnext B_i \big) K_i - \gamma B^T_i \EPKnext A_i\), and
\begin{equation} \label{eq:chidef}
\chi_{\hat{K}} =  \sumtinf \gamma^t \textup{diag} \left( X_1(t), \ldots, X_{n_s}(t) \right).
\end{equation}
Later, for simplicity, we use \( \nabla_{\hat{K}}C(\hat{K},\sigma) \coloneqq \nabla C(\hat{K}) = F_K \) and \( \nabla_{\hat{\sigma}}C(\hat{K},\sigma) \coloneqq \nabla C(\sigma) = F_{\sigma} \)\footnote{We slightly abuse our notation here. Notice $F_K$ is a matrix and $\nabla C(\hat{K},\sigma)$ is a vector. When calculating $F_K$, we take the gradient with respect to all entries of $\hat{K}$. Then we augment the columns of $F_K$ with $F_\sigma$ to obtain $\nabla c(\hat{K},\sigma)$.}.
\end{lemma}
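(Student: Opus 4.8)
The plan is to differentiate the closed-loop cost directly, exploiting the duality between the coupled Lyapunov recursion~\eqref{eq:lyap_markov} for $\{P_i^{\hat{K}}\}$ and the covariance recursion~\eqref{eq:X_i} for $\{X_i(t)\}$. First I would rewrite the instantaneous cost in a form where the exploration noise is integrated out: since $u_t=-K_{\omega_t}x_t+\sigma\nu_t$ with $\nu_t\sim\N(0,I)$ independent of $(x_t,\omega_t)$, one has $\Exp{u_t^T R_{\omega_t}u_t\mid x_t,\omega_t}=x_t^T K_{\omega_t}^T R_{\omega_t}K_{\omega_t}x_t+\sigma^2\tr{R_{\omega_t}}$. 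Writing $S_m:=Q_m+K_m^T R_m K_m$ and using $X_m(t)=\Exp{x_t x_t^T\1{\omega_t=m}}$, this turns~\eqref{eq:markov_cost} into the rollout form $C(\hat{K},\sigma)=\sumtinf\gamma^t\big(\sum_{m\in\Omega}\tr{S_m X_m(t)}+\sigma^2\sum_{m\in\Omega}\tr{R_m}\Prob{\omega_t=m}\big)$, which I will differentiate term by term; mean-square stability of the $\sqrt{\gamma}$-scaled closed loop justifies interchanging $\nabla$ with $\sumtinf$ and guarantees that $\chi_{\hat{K}}$ in~\eqref{eq:chidef} converges.

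For $\nabla_{K_i}C$ I would separate an explicit and an implicit contribution. The explicit part comes from the single term $\tr{K_i^T R_i K_i X_i(t)}$ and contributes $2R_i K_i\Xi_i$, where $\Xi_i:=\sumtinf\gamma^t X_i(t)$ is the $i$-th diagonal block of $\chi_{\hat{K}}$. The implicit part, $\sumtinf\gamma^t\sum_m\tr{S_m\,\partial_{K_i}X_m(t)}$, is the crux. Differentiating~\eqref{eq:X_i} gives a coupled linear recursion for $\partial_{K_i}X_m(t)$ driven by a source supported on mode $i$ through $\partial_{K_i}(A_i-B_i K_i)=-B_i\,\partial K_i$, while the $K$-independent injections $\sigma^2 B_m B_m^T+\eps^2 I$ drop out. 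I would then collapse this implicit sum by an adjoint/telescoping argument: the map sending stage weights $\{S_m\}$ to the discounted values $\{P_m^{\hat{K}}\}$ through~\eqref{eq:lyap_markov} is exactly the transpose of the covariance propagation in~\eqref{eq:X_i}, so iterating the telescoping identity replaces $\{S_m\}$ by $\{P_m^{\hat{K}}\}$ and leaves only the pairing of $P^{\hat{K}}$ against the mode-$i$ source, yielding $-2\gamma B_i^T\EPKnext(A_i-B_i K_i)\Xi_i$. Adding the two pieces gives $\nabla_{K_i}C=2\big[(R_i+\gamma B_i^T\EPKnext B_i)K_i-\gamma B_i^T\EPKnext A_i\big]\Xi_i=2L_i(\hat{K})\Xi_i$, and assembling the blocks over $i$ produces $F_K=2\bmat{L_1(\hat{K})&\cdots&L_{n_s}(\hat{K})}\chi_{\hat{K}}$ since $\chi_{\hat{K}}$ is block diagonal.

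For $\nabla_\sigma C$ I would use that the Lyapunov solution $P_i^{\hat{K}}$ in~\eqref{eq:lyap_markov} does not depend on $\sigma$, so in the value-function form~\eqref{eq:markov_cost} only the additive terms $\sum_i\rho_i z_i$ carry $\sigma$. Differentiating the linear system~\eqref{eq:z_i} in $\sigma$ gives a recursion of the same form, $\partial_\sigma z_i=2\sigma\tr{R_i+\gamma B_i^T\EPKnext B_i}+\gamma\sum_j p_{ij}\partial_\sigma z_j$, which I would solve and weight by $\rho$; equivalently, applying the same adjoint argument to the $\sigma$-dependent noise source $2\sigma B_m B_m^T\Prob{\omega_t=m}$ in the differentiated version of~\eqref{eq:X_i} converts it into a pairing against $P^{\hat{K}}$ and reproduces the factor $\tr{R_i+\gamma B_i^T\EPKnext B_i}$. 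Collecting the discounted occupancy weights then delivers $F_\sigma$ as the stated weighted sum of $\tr{R_i+\gamma B_i^T\EPKnext B_i}$.

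The main obstacle is the adjoint/telescoping step for the implicit sensitivity $\partial_{K_i}X_m(t)$: the sensitivities are coupled across all modes and all times, and the argument converting the stage weights $S_m$ into the value matrices $P_m^{\hat{K}}$ must be justified as a convergent infinite iteration. This is exactly where mean-square stability of the $\sqrt{\gamma}$-scaled closed loop enters, both to make the Lyapunov operator a contraction so the telescoping series sums to $P^{\hat{K}}$, and to guarantee that $\chi_{\hat{K}}$ and all interchanged limits are well defined. Everything else is routine matrix calculus, using symmetry of $R_i$, $X_m(t)$, and $\EPKnext$ to read off the gradient matrices.
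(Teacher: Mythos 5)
Your derivation is correct and rests on exactly the same mechanism as the paper's proof, but organized in the dual direction. The paper differentiates the value-function representation $C=\mathbb{E}\sum_i\rho_i(x_0^TP_i^{\hat K}x_0+z_i)$, obtains the differentials $dP_i^{\hat K}$ and $dz_i$ implicitly from \eqref{eq:lyap_markov} and \eqref{eq:z_i}, and then unrolls \emph{forward} in time via the telescoping identity pairing $dP_i^{\hat K}$ against $X_i(t)$; you instead start from the rollout form $C=\sum_t\gamma^t\sum_m\operatorname{tr}(S_mX_m(t))+\dots$, differentiate the forward covariance recursion \eqref{eq:X_i}, and collapse \emph{backward} via the adjoint identity that converts the stage weights $S_m$ into $P_m^{\hat K}$. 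These are the same duality between \eqref{eq:lyap_markov} and \eqref{eq:X_i} read in opposite directions, and both isolate the same mode-$i$ source $-B_i\,dK_i$ to produce $2L_i(\hat K)\Xi_i$. What your organization buys is that differentiability comes from term-by-term differentiation of a locally uniformly convergent series (guaranteed by mean-square stability of the $\sqrt{\gamma}$-scaled loop) rather than from the implicit function theorem applied to the coupled Lyapunov equations, which is arguably more elementary; to be fully rigorous you should note that the \emph{differentiated} series also converges locally uniformly, not just $\chi_{\hat K}$. One caution on the $\sigma$-component: carried out honestly, your adjoint computation yields $\partial_\sigma C=2\sigma\sum_t\gamma^t\sum_i\mathbb{P}(\omega_t=i)\operatorname{tr}\big(R_i+\gamma B_i^T\mathcal{E}_i(P^{\hat K})B_i\big)$, i.e.\ the mode weights are the discounted occupancies $\sum_t\gamma^t\mathbb{P}(\omega_t=i)$ and there is a factor $2\sigma$ from $d(\sigma^2)$; this coincides with the printed $F_\sigma=\frac{\sigma}{1-\gamma}\sum_i\rho_i\operatorname{tr}(\cdot)$ only under a stationarity convention for $\rho$ and modulo that factor of $2$, so you should state what your derivation actually produces rather than asserting it ``delivers $F_\sigma$ as stated.''
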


\begin{proof}
The differentiability of $ C(\hat K, \sigma)$ can be proved using the implicit function theorem, and this step is similar to the proof of Lemma 3.1 in \citet{rautert1997computational}.
The derivation of the gradient formula follows the similar steps to Lemma 1 in \cite{joaoACC}.  The difference is that now we need to apply \eqref{eq:z_i} and \eqref{eq:X_i}   in our recursive derivations. We can use \eqref{eq:z_i} and \eqref{eq:X_i} to prove the following key fact:
\begin{align*}
&\sum_{i\in\Omega} \tr{dP_i^{\hat{K}} X_i(t)} + \sum_{i\in\Omega } \1{\omega_t = i}dz_i^{\hat{K}}\\ =&\sum_{i\in\Omega}\tr{2dK_i^T L_i(\hat{K}) X_i(t)} 
+ \gamma\left(\sum_{i\in\Omega} \tr{dP_i^{\hat{K}} X_i(t+1)} + \sum_{i\in\Omega } \1{\omega_{t+1} = i}dz_i^{\hat{K}}\right),
\end{align*}
which can be recursively applied to show the gradient formula.
\end{proof}

Suppose \(\Expx{x_0 x_0^T}\) is full rank and $\rho_i>0$ for all $i$.
Then a stationary point given by $\nabla C(\hat{K}, \sigma)=0$  has to satisfy
$L_i(\hat{K}) = \big(R_i + \gamma B^T_i \EPKnext B_i \big) K_i - \gamma B^T_i \EPKnext A_i=0 \text{ for all } i\in\Omega, \text{ and } \sigma = 0$.
It becomes obvious that $L_i(\hat{K}) = 0$ leads to the global optimal policy $\hat{K}^*$ defined by~(\ref{eq:markov_riccati}).
Hence the only stationary point is the global minimum of the original MJLS LQR problem. 

Notice that the  optimization of $C(\hat K, \sigma)$ is a constrained optimization problem whose feasible set consists of all $\hat{K}$ stabilizing  a scaled system $x_{t+1}=\sqrt{\gamma}(A_{\omega_t}-B_{\omega_t} K_{\omega_t})x_t$ in the mean square sense. 
The cost function $C(\hat K, \sigma)$ is finite and differentiable only within the feasible set. Clearly, the size of the feasible set depends on the discounted factor $\gamma$, and the scaling factor $\sqrt{\gamma}$ is standard.

\subsection{Linear Convergence of the Population Dynamics of NPG}

If the model information is known, one can solve $\hat{K}^*$ using the following model-based NPG updates:
\begin{equation}\label{eq:npgd}
\hat{K}^{n+1} = \hat{K}^n-\eta_n \sigma_n^2 \nabla C(\hat{K}^n) \chi_{\hat{K}^n}^{-1}, \text{ and } 
\sigma_{n+1} = \sigma_n - \alpha_n \sigma_n^2 \nabla C(\sigma_n) \left( \frac{1-\gamma}{2 k} \right) 
\end{equation}
The  initial policy is denoted as $\hat{K}^0$ which is assumed to stabilize the $\sqrt{\gamma}$-scaled closed-loop dynamics in the mean square sense.
In \cite{joaoACC}, it is shown that the model-based NPG updates for the MJLS LQR problem with a deterministic policy parameterization and $\gamma=1$\footnote{In \cite{joaoACC}, model-based policy optimization is considered. One does not need a stochastic policy for exploration. Hence it is sufficient to use a deterministic policy there, and the discounted factor is not needed for the problem formulation} can be guaranteed to stay in the feasible set and converge to the global minimum. 
We can obtain similar results for the discounted case with a Gaussian policy, and prove the global convergence of \eqref{eq:npgd}.
In following sections, we will consider the case where the model information is unknown, and implement the model-free natural policy gradient method whose population dynamics exactly matches the above model-based updates. 
We will use model-free RL techniques to estimate $\nabla C(\hat{K})$ and $\chi_{\hat{K}}$ from data. It is expected that the model-free NPG updates will closely track the dynamics of \eqref{eq:npgd} and work well if sufficient data is provided for the gradient estimation.
Connections between NPG and the exact dynamics \eqref{eq:npgd} are further discussed in Appendix \ref{sec:A1}. For completeness, a self-contained proof for the linear convergence of \eqref{eq:npgd} is presented in Appendix \ref{sec:A2}.



\subsection{Natural Policy Gradient and REINFORCE}

Now we discuss the model-free policy learning of the MJLS LQR problem.
From the exact natural policy gradient update rule~\eqref{eq:npgd}, we need to obtain estimates for both the policy gradient and state covariances.
Based on~\eqref{eq:chidef}, we can directly estimate $\chi_{\hat K}$ by averaging $\sum_{t=0}^{T_F} \gamma^t x_t x_t^T$ (with some large $T_F$) over multiple sampled trajectories of the MJLS model \eqref{eq:ltv}.

To estimate $\nabla C(\hat K, \sigma)$ we will adopt the REINFORCE algorithm, which uses a Monte Carlo rollout to estimate the policy gradient. Specifically, for a stochastic policy $\pi_\theta(u_t\vert x_t,w_t)$, we can set 
$\nabla C(\theta) = \mathbb{E} \left[ \sumtinf \gamma^t \nabla_\theta \log \pi_\theta(u_t | x_t, \omega_t) \Psi_t \right]$,
where $\Psi_t$ can be calculated using one of following choices: 1) total reward of the trajectory; 2) reward following input $u_t$; 3) baseline version of the previous item; 4) state-action value function; 5) advantage function; 6) TD residual; 7) generalized advantage estimation.
In this paper, we consider the baseline approach. 
Let us align the notation as
\begin{align*}
\theta=\bmat{\vect{\hat{K}}\\ \sigma}.
\end{align*}
It is known that we can add an arbitrary baseline without affecting the expectation of the policy gradient.
Here we used the cumulative average reward at timestep $t$ as our baseline. 
The policy gradient above is in terms of an expectation, so we can use sampling methods to approximate it as
\begin{equation*}
\nabla C(\theta) \approx \frac{1}{N} \sum_{i = 1}^N \left( \sum_{t=0}^{T_F} \gamma^t \nabla_\theta \log \pi_\theta(u_{t,i} | x_{t,i}, \omega_{t,i}) \left( \sum_{t'=t}^{T_F} \gamma^{t' - t} c_{t',i} - b_{t,i} \right) \right) \eqqcolon \widehat{\nabla C}(\theta)
\end{equation*}
for sufficiently large $N$ and $T_F$. Here, $N$ is the number of sampled trajectories and $T_F$ is the horizon length.
 We calculate the baseline as \(b_{t,j} = \frac{1}{j}\sum_{i = 1}^j \left( \sum_{t'=t}^{T_F} \gamma^{t' - t}c_{t',i} \right)\).
We use a cumulative formulation of the baseline so it is not necessary to store all previous trajectories.
Algorithm~\ref{alg:alg1} below provides a procedure to compute the estimates \(\widehat{\nabla C}(\hat{K})\) and \(\widehat{\chi}_{\hat{K}^{n}}\)

Having obtained a model-free policy gradient estimation, we can directly use the gradient estimates to update the controller gains with the natural policy gradient step:
\begin{equation}\label{eq:mf_npgd}
\hat{K}^{n+1} =  \hat{K}^{n} - \eta_n \sigma_n^2 \widehat{\nabla C}(\hat{K}^{n}) \widehat{\chi}_{\hat{K}^{n}}^{-1}, \text{ and }
\sigma_{n+1} = \sigma_n - \alpha_n \sigma_n^2 \widehat{\nabla C}(\sigma_n) \left( \frac{1-\gamma}{2 k} \right)
\end{equation}

We now discuss Algorithm~\ref{alg:alg1}, which is repeated at every NPG iteration step.
To run Algorithm~\ref{alg:alg1}, we need to specify the control policy \(\hat{K}\), an input variance \(\sigma^2\), the single trajectory length \(T_F\), and the total number of trajectories \(N\).
For each trajectory generated we need to measure the systems states \(x_t\), actions \(u_t\), state-action cost \(c_t\), and the jump parameter \(\omega_t\).
We note that it is common to use a constant exploration noise $\sigma$ in many RL algorithms, however here we are treating $\sigma$ as a parameter one wants to learn. 
While we know that the optimal $\sigma$ for the LQR problem is zero, having a variable $\sigma$ helps finding a better trade-off between exploration and exploitation. 
For our simulations, we set up $\alpha_n$ in a way that $\sigma$ decreases to $0$ at a pre-specified linear rate $0.99$.

\begin{algorithm2e}
 Starting from a control policy \(\hat{K}\), input variance \(\sigma^2\), trajectory length \(T_F\), and batch size \(N\)\;
 \For{$i = 1,\ldots, N$}{
  Generate a trajectory $\tau_i$ and measure $\{ x_t, u_t, \omega_t, c_t \}_{t = 0, \ldots, T_F}$ starting from $x_0 \sim \mathcal{D}$ with $u_t \sim \mathcal{N}(-K_{\omega_t} x_t, \sigma^2 I)$\;
  Initialize $b_t = 0$ and $\hat{X}_{j, i} = 0$ for all $j\in\Omega$ and $t\in[0, T_F]$\;
    \For{$t = 0,\ldots, T_F$}{
      Compute:\\
      $\hat{G}_{t} = -\frac{1}{\sigma^2}  (u_t + K_{\omega_t} x_t) \left( \hat{e}_{\omega_t} \otimes x_t \right)^T$,\\
      $\hat{Q}_t = \sum_{t' = t}^{T_F} \gamma^{t' - t} c_t$,\\
      $\hat{S}_t = -\frac{k}{\sigma} + \frac{1}{\sigma^3}(u_t + K_{\omega_t} x_t)^T (u_t + K_{\omega_t} x_t)$,\\
       \( \hat{X}_{\omega_t, i} = \hat{X}_{\omega_t, i} + \gamma^t x_t x_t^T  \), and\\
       \( b_t = ((i-1)b_t + \hat{Q}_t)/i \)\;
    }
    Compute \(\widehat{\nabla C}_i (\hat{K}) = \sum_{t = 0}^{T_F} \gamma^t \hat{G}_{t} (\hat{Q}_t - b_t) \) and \(\widehat{\nabla C}_i (\sigma) = \sum_{t = 0}^{T_F} \gamma^t \hat{S}_{t} (\hat{Q}_t - b_t) \)\;
 }
 Return the estimates: \(\widehat{\nabla C}(\cdot) = \frac{1}{N}\sum_{i = 0}^N \widehat{\nabla C}_i(\cdot)\)  and \(\widehat{\chi}_{\hat{K}^{n}} = \frac{1}{N}\sum_{i = 0}^N \text{diag}\big(\hat{X}_{1, i}, \ldots, \hat{X}_{n_s, i}\big)\) 
 \caption{Model-Free Switched Policy Gradient Estimation}
 \label{alg:alg1}
\end{algorithm2e}


\section{Model-Free Implementations}
\label{sec:DataNPG}

In this section, we implement the model-free policy gradient algorithm to different example systems.

\subsection{Small Scale Problem}
Consider a MJLS which can switch between two modes, where each mode \((A_1, B_1)\) and \((A_2, B_2)\) are not individually stabilizable, but the switched system is. We define the state space matrices as:
\begin{equation*}
A_1 = \bmat{0.4 & 0.6 & -0.1 \\ -0.4 & -0.6 & 0.3 \\ 0 & 0 & 1}\!, \quad B_1 = \bmat{1 \\ 1 \\ 0}\!, \quad
A_2 = \bmat{0.9 & 0.5 & -0.1 \\ 0 & 1 & 0 \\ -0.1 & 0.5 & -0.4}\!, \quad B_2 = \bmat{1 \\ 0 \\ 1}\!,
\end{equation*}
with cost matrices \(Q_1 = I_3\), \(R_1 = 1\), \(Q_2 = 2 I_3\), and \(R_2 = 2\).
We also set the transition probability \(\mathcal{P}= \bmat{0.7 & 0.3 \\ 0.4 & 0.6}\), and initial distribution \(\rho = \bmat{0.5 \\ 0.5}\).

Using Algorithm~\ref{alg:alg1} with \eqref{eq:mf_npgd}, we computed the policy of the system above.
For all simulations, we set \(K_1 = K_2 = 0_{k\times d}\) as our initial gain values since \(C(\hat{K}^0, \sigma_0)\) is finite, and used the parameters \(T = 500\), \(\sigma_0 = 0.5\), \( \gamma =  0.99 \), and \(\eta = 0.01\sigma_0^{-2}\). 
For \( N \in \{1000, 2500, 5000, 10000\}\), we ran 100 steps of~\eqref{eq:mf_npgd} and computed the associated costs of the policy obtained.
Each policy iteration was computed 1000 times, with the results shown in Figure~\ref{fig:small}.

\subsection{System with Large Number of Modes}

We now consider a system with 100 states, 20 inputs, and 100 modes.
The matrices \(A\) and \(B\) were generated using \texttt{drss} in MATLAB in order to guarantee that the system would have finite cost with \(\hat{K}^0 = 0\).
The probability transition matrix \(\mathcal{P}\) was sampled from a Dirichlet Process \( \text{Dir}(99\cdot I_{100} + 1) \), which always results in an irreducible Markov chain.
For simplicity, we set \(\rho_i = 1/100\), \( Q_i = I \), and \( R_i = I \) for all \( i \in \Omega \).
Here, we used the step size \(\eta = 0.000125\sigma_0^{-2} \), initial noise \( \sigma_0 = 1 \), and batch sizes \(N = \{25000, 50000\}\).
The resulting policy costs are shown in Figure~\ref{fig:large}.

Obtaining controllers for systems with a large number of modes can be computationally hard using \eqref{eq:markov_riccati}, as the number of coupled equations grows with the number of modes.
By using the data-driven approach, we only need to assert that we visit each mode often enough.
Here, this condition is directly satisfied since the modes are sampled from an irreducible Markov chain.

\begin{figure}
\centering
\floatconts
  {fig:cost_diff}
  {\vspace{-22pt}\caption{Shown are the relative error between policy \( \hat{K}^i \), obtained using Algorithm~\ref{alg:alg1}, and the optimal policy \(\hat{K}^*\), computed using~(\ref{eq:markov_riccati}).
  The relative error was calculated as \( \left|\frac{C(\hat{K}^i, 0) - C(\hat{K}^*, 0)}{C(\hat{K}^*, 0)}\right| \times 100  \).
  The solid lines indicate the mean expected percent error.}\vspace{-10pt}}
  {%
    \subfigure[Small scale example]{%
      \label{fig:small}%
      \includegraphics[width=0.475\textwidth]{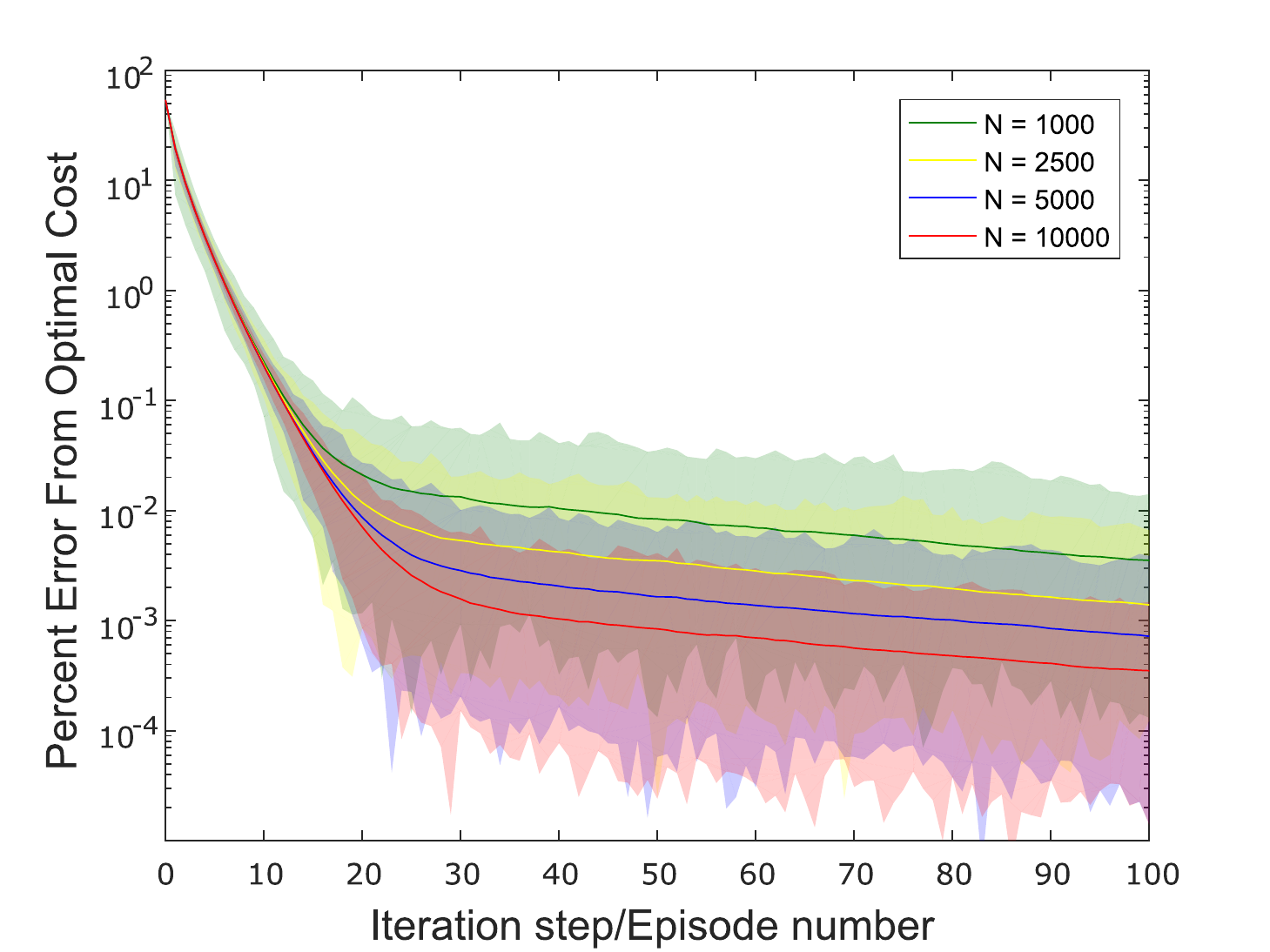}
    }\quad 
    \subfigure[Large number of modes]{%
      \label{fig:large}%
      \includegraphics[width=0.475\textwidth]{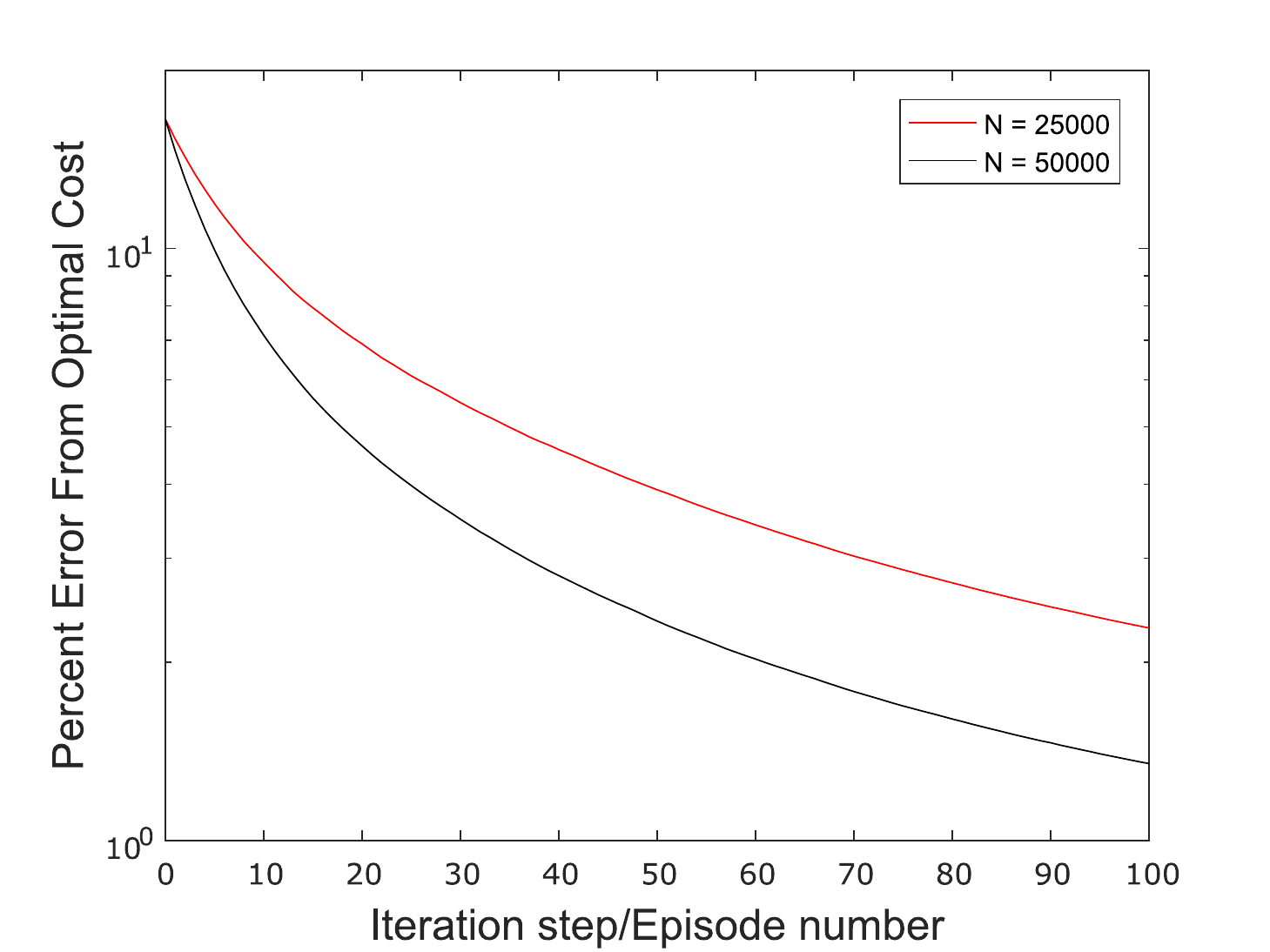}
    }
  }
  
\end{figure}

\subsection{Structured Controller}

Now we consider the case where we want to impose some certain structure to the designed controller.
For example, in the output feedback problem, the controller cannot access the full state measurements.
Structured control design also finds many applications in decentralized control, where individual controllers might not have access to the global system state.
To find the optimal structured controller, we can simply project the estimated gradient to maintain the desired structure, and update the control gains using gradient descent, instead of the natural policy gradient.

To illustrate how the projected gradient descent works for the structured control design problem, we test the algorithm on a small example system.
Consider the following two-mode system:
\begin{equation*}
A_1 = \bmat{-0.4 & 1.0 \\ 0.0 & 0.9}\!, \quad
A_2 = \bmat{0.0 & 1.0 \\ -0.4 & 0.9}\!, \quad \{B_i\}_{i = 1,2} = \bmat{1.0 & 0.5\\ 0.0 & 2.0}\!,
\end{equation*}
with the weighting matrices and transition probability:
\begin{equation*}
\{Q_i\}_{i = 1,2} =\bmat{10 & 0 \\ 0 & 20}\!, \quad \{R_i\}_{i = 1,2} = \bmat{1 & 0 \\ 0 & 1}\!, \quad \mathcal{P} = \bmat{0.8 & 0.2 \\ 0.3 & 0.7}, \quad \rho = \bmat{0.5 \\ 0.5}.
\end{equation*}
Using \eqref{eq:markov_riccati} and \eqref{eq:lyap_markov}, the total expected cost following the optimal unstructured policy, $\hat{K}^*_{unstruc}$, is $C(\hat{K}^*_{unstruc}) = 2.5704$, while the expected cost of having no feedback, $\hat{K}_0$, is $C(\hat{K}_0) = 8.4861$.

Now suppose that we can only measure the first state of the system.
This is equivalent to having a controller of the form
\begin{equation}\label{eq:K_shape}
K_{shape} = \bmat{\bullet & 0 \\ \bullet & 0}.
\end{equation}
 If we project the estimate of the gradient onto \eqref{eq:K_shape}, and iterate using gradient descent, we obtain the expected cost $C(\hat{K}^{100}_{struct}) = 6.2226$ after 100 iteration steps. Clearly, the resultant structured control gain is not simply the projection of the optimal unstructured control gain onto the structured space.
If we simply project $\hat{K}^*_{unstruc}$ onto \eqref{eq:K_shape}, and denote that by $\hat{K}^*_{proj}$, the total expected cost becomes $C(\hat{K}^*_{proj}) = 13.3227$, which is worse than not having any feedback action at all.

\if 0
Here we consider a simple line-formation problem, where we have $m=10$ subsystems and we set $y_i(t) = x_i(t) - x_{i-1}(t)$.
We can still use the same approach as before, but using the cost matrix $\bar{Q} = C^T Q C$.
For our controller, we assume that each subsystem $i$ only has knowledge of its own states and $i-1$ states.
Therefore, we say we have a structured controller due to the information constraints.
In Figure~\ref{fig:struct} we compare the resulting policy to the unstructured optimal controller.
\fi
\section{Conclusion}
In this paper we revisited the optimal control of Markovian Jump Linear Systems as a benchmark for further understanding of policy-based RL algorithms and hybrid MDPs.
We discussed how to set up the policy parameterization for such hybrid MDPs, and present an efficient data-driven implementation of the natural policy gradient method for learning optimal state-feedback controllers of unknown MJLSs. 
We demonstrated the performance of the model-free natural policy gradient method on different example systems.
Our results suggest that it is promosing to apply policy-based RL methods for optimal control of large scale switching systems, where the computational complexity grows as the system size increases.

\bibliography{nips}

\clearpage
\appendix
\begin{center}
{\Large\bf Supplementary Material}
\end{center}

\section{More Discussions on  Natural Policy Gradient for MJLS}
\subsection{Connections between NPG and the Exact Dynamics \eqref{eq:npgd}}
\label{sec:A1}

Suppose $\theta$ is a column vector parameterizing the controller.
In our case, we have $\theta\coloneqq \vect{\hat{K}}$ (recall that $\hat{K}$ is obtained by horizontally concatenating the control gains for each mode, as \(\hat{K} = \bmat{K_1 & \cdots & K_{n_s} }\)). 
Notice here we fix the noise level $\sigma$ as a constant to simplify the derivations.
From \cite{kakade2002natural}, the natural policy gradient method iterates as follows:
\begin{equation}\label{eq:update}
\theta \leftarrow \theta - \eta G_\theta^{-1} \nabla C(\theta)
\end{equation}
where $G_\theta$ is the Fisher information matrix, which is calculated as
\begin{equation}
G_\theta = \Expx{\sumtinf \gamma^t \left(\nabla \log \pi_\theta (u_t | x_t, \omega_t)\right) \left(\nabla \log \pi_\theta (u_t | x_t, \omega_t) \right)^T}
\end{equation}
Since Gaussian policies satisfy regularity conditions, each element of the Fisher information matrix can also be computed as
\begin{equation}
[G_\theta]_{i,j} = -\Expx{\sumtinf \gamma^t \frac{\partial}{\partial \theta_i \partial \theta_j} \log \pi_\theta (u_t | x_t, \omega_t)}
\end{equation}

Let $f(i) = (i-1) \mod k + 1$, $g(i) = (\lceil i/k \rceil -1) \mod d + 1$, and $h(i) = \lceil t/kd \rceil$.
Then, each element of the Fisher information matrix is
\begin{equation*}
[G_\theta]_{i,j} = \frac{1}{\sigma^2} \Expx{\sumtinf \gamma^t x_{g(i)}(t) x_{g(j)}(t) \1{h(i)=\omega_t} \1{h(j) = \omega_t} \1{f(i) = f(j)} }
\end{equation*}
This is equivalent to
\begin{equation}
G_\theta = \frac{1}{\sigma^2} \left(\chi_{\hat{K}} \otimes I_k\right).
\end{equation}
Recall that for some matrices $A$,$B$,and $X$, we have the relationship $(B^T \otimes A) \vect{X} = \vect{AXB}$. Then the update~\eqref{eq:update} can be written as
\begin{align*}
G_\theta^{-1} \vect{\nabla C(\hat{K})} &= \sigma^2 \left( \chi_{\hat{K}}^{-1} \otimes I_n\right) \vect{\nabla C(\hat{K})} \\
    &= \vect{\sigma^2 I_n \nabla C(\hat{K}) \chi_{\hat{K}}^{-1}} \\
    &= \vect{\sigma^2 \nabla C(\hat{K}) \chi_{\hat{K}}^{-1}}.
\end{align*}
This gives us the update step as in equation~\eqref{eq:npgd}.

\subsection{Convergence of the Exact Dynamics \eqref{eq:npgd}}
\label{sec:A2}

The convergence analysis for \eqref{eq:npgd} is very similar to the deterministic MJLS case \citep{joaoACC}. For completeness, we present a proof here.  
To state the convergence result, it is helpful to define \(\hat{R} = \diag{R_1, \ldots, R_{n_s}}\), and  \(\hat{B} = \diag{B_1 , \ldots , B_{n_s}}\).
We also denote $\mu \coloneqq \min_{i\in \Omega}(\rho_i)\, \sigma_{\min}\!\left( \Expx{x_0 x_0^T} \right)$ and assume $\mu>0$.  
This indicates that there is a chance of starting from any Markov state, and that the expected covariance of the initial states is full rank. For simplicity, we denote the feasible set of this constrained optimization problem as $\mathcal{K}$, i.e. $\mathcal{K}$ consists of all $\hat{K}$ stabilizing the scaled system $x_{t+1}=\sqrt{\gamma}(A_{\omega_t}-B_{\omega_t} K_{\omega_t})x_t$ in the mean square sense. 
Notice $\chi_{\hat{K}}$ depends on $\sigma$, and hence in this section we will denote this term as $\chi_{\hat{K}}(\sigma)$ to make such a dependence explicit. The main convergence result is stated as follows.
\begin{theorem}\label{thm:conv}
Suppose \(\hat{K}^0\in \mathcal{K}\) and \(\gamma \in (0,1)\). 
Denote $\tilde{\eta}:=\eta_n \sigma_n^2$. 
If we fix $\tilde{\eta}$ as a constant satisfying the following bound:
\begin{equation}
    \tilde{\eta} \le \frac{1}{2}\left( \| \hat{R} \| + \gamma \frac{\| \hat{B} \|^2 C(\hat{K}^0, 0)}{\mu} \right)^{-1},
\end{equation}{}
then the  natural policy gradient method~\eqref{eq:npgd} converges to the global minimum $\hat{K}^*$ linearly as follows
\begin{equation}
\label{eq:mainCon}
C(\hat{K}^N, 0) - C(\hat{K}^*, 0) \le \left( 1 - 2\tilde{\eta} \mu  \frac{\sigma_{\min}(\hat{R})}{\| \chi_{\hat{K}^*}(0) \|} \right)^N \left(C(\hat{K}^0, 0) - C(\hat{K}^*, 0) \right).
\end{equation}
\end{theorem}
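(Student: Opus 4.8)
The plan is to adapt the ``gradient domination plus almost smoothness'' scheme—used for LQR in \cite{pmlr-v80-fazel18a} and for deterministic MJLS in \cite{joaoACC}—to the present discounted, coupled setting, exploiting one simplification peculiar to the natural gradient. First I would note that with $\tilde\eta=\eta_n\sigma_n^2$ held fixed, substituting the gradient formula \eqref{eq:exact_grad} into \eqref{eq:npgd} causes the preconditioner $\chi_{\hat K^n}^{-1}$ to cancel the trailing $\chi_{\hat K^n}$ factor in $\nabla C(\hat K^n)$, so the update collapses to the mode-wise rule $K_i^{n+1}=K_i^n-2\tilde\eta\,L_i(\hat K^n)$. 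Because $L_i$ and $P_i^{\hat K}$ are determined by $\hat K$ alone (the Lyapunov recursion \eqref{eq:lyap_markov} never sees $\sigma$), this $\hat K$-iteration is independent of the noise schedule, which is precisely why progress can be measured through the noise-free cost $C(\cdot,0)$ appearing in \eqref{eq:mainCon}.

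Second, I would establish an exact cost-difference identity. For any $\hat K,\hat K'\in\mathcal{K}$, telescoping the coupled equations \eqref{eq:lyap_markov} in the same manner as the key identity proved for Lemma~\ref{lemma:policy_grad} gives
\[
C(\hat K',0)-C(\hat K,0)=\sum_{i\in\Omega}\tr{X_i^{\hat K'}\Big(2(K_i'-K_i)^T L_i(\hat K)+(K_i'-K_i)^T\big(R_i+\gamma B_i^T\EPKnext B_i\big)(K_i'-K_i)\Big)},
\]
where $X_i^{\hat K'}$ is the discounted, noise-free state correlation in mode $i$ under $\hat K'$, so that $\diag{X_1^{\hat K'},\ldots,X_{n_s}^{\hat K'}}=\chi_{\hat K'}(0)$. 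The coupling through $\EPKnext=\sum_j p_{ij}P_j$ is what makes the telescoping delicate. Completing the square in this identity with $\hat K'=\hat K^*$ then yields the gradient-domination bound
\[
C(\hat K,0)-C(\hat K^*,0)\le\|\chi_{\hat K^*}(0)\|\sum_{i\in\Omega}\tr{L_i(\hat K)^T\big(R_i+\gamma B_i^T\EPKnext B_i\big)^{-1}L_i(\hat K)},
\]
using $X_i^{\hat K^*}\preceq\|\chi_{\hat K^*}(0)\|I$ for each block.

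Third, the one-step contraction follows by putting $\hat K'=\hat K^{n+1}$ in the cost-difference identity and inserting $K_i^{n+1}-K_i^n=-2\tilde\eta\,L_i(\hat K^n)$: the linear term is $-4\tilde\eta\sum_i\tr{X_i^{n+1}L_i^T L_i}$ and the quadratic term is $+4\tilde\eta^2\sum_i\tr{X_i^{n+1}L_i^T(R_i+\gamma B_i^T\EPKnext B_i)L_i}$. The stated step size is exactly what forces $\tilde\eta\,\|R_i+\gamma B_i^T\EPKnext B_i\|\le\tfrac12$ uniformly—after bounding $\|\EPKnext\|\le\|P^{\hat K^n}\|\le C(\hat K^0,0)/\mu$—so the quadratic term is absorbed into half the linear one and the net decrease is at least $2\tilde\eta\sum_i\tr{X_i^{n+1}L_i^T L_i}\ge 2\tilde\eta\mu\sum_i\|L_i\|_F^2$, where I use $X_i^{n+1}\succeq\rho_i\Expx{x_0 x_0^T}\succeq\mu I$. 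Relating $\sum_i\|L_i\|_F^2$ to the gradient-domination quantity via $R_i+\gamma B_i^T\EPKnext B_i\succeq R_i\succeq\sigma_{\min}(\hat R)I$ then gives the per-step factor $1-2\tilde\eta\mu\,\sigma_{\min}(\hat R)/\|\chi_{\hat K^*}(0)\|$, and iterating over $n$ produces \eqref{eq:mainCon}.

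The main obstacle I anticipate is carrying the feasibility and the uniform bound $\|P^{\hat K^n}\|\le C(\hat K^0,0)/\mu$ jointly with the contraction by induction: the step-size condition that guarantees the cost decrease relies on this bound, which in turn relies on the monotone decrease of the cost and on $\hat K^{n+1}\in\mathcal{K}$. Setting up this interlocking induction—and checking that the mode coupling through $\{p_{ij}\}$ in $\EPKnext$ does not break the telescoping behind the cost-difference identity—is where the real care is needed; the remaining manipulations mirror the deterministic MJLS argument of \cite{joaoACC}.
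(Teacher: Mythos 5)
Your proposal is correct and follows essentially the same route as the paper: the $\sigma$-independence of the natural-gradient iterate $K_i^{n+1}=K_i^n-2\tilde\eta L_i(\hat K^n)$, the exact cost-difference (``almost smoothness'') identity obtained by telescoping the coupled Lyapunov equations, gradient domination via completing the square, absorption of the quadratic term under the stated step size using $\|\mathcal{E}_i(P^{\hat K^n})\|\le C(\hat K^0,0)/\mu$, and an interlocking induction. The one step you flag as the main obstacle---feasibility $\hat K^{n+1}\in\mathcal{K}$---is handled in the paper not through cost monotonicity (which would risk circularity) but by a separate Lyapunov-certificate argument showing that $Y_i=P_i^{\hat K^n}$ strictly certifies mean-square stability of the $\sqrt{\gamma}$-scaled closed loop under $\hat K^{n+1}$ whenever $\tilde\eta\le \frac{1}{2}\|\hat R+\gamma\hat B^T\hat{\mathcal{E}}(P^{\hat K^n})\hat B\|^{-1}$.
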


Before proceeding to the proof of the above result, we make a few remarks here.
The main difference between the above result and Theorem 2 in \citet{joaoACC} is that now we use a discounted cost and an exploration noise is also involved.
Consequently, we need to treat $\eta_n \sigma_n^2$ as a ``scaled" stepsize $\tilde{\eta}$ and fix this quantity as a constant to ensure the linear convergence. If we replace the stepsize in Theorem 2 of \citet{joaoACC} with the scaled stepsize $\tilde{\eta}$, then we obtain the bound \eqref{eq:mainCon}.
 Notice that there is a gap between the above theoretical result and the simulation study presented in the main paper. In the model-free implementations, the natural gradient is noisy and hence one needs to gradually decrease the scaled stepsize $\tilde{\eta}$ for the control of variance. Therefore, in our simulations, we fix $\eta_n$ as a constant and gradually decrease $\sigma_n$. In contrast, for the population dynamics, fixing the scaled stepsize $\tilde{\eta}$  as a constant leads to the linear convergence result.

One may think that it is more reasonable to consider a convergence bound in the form of
\(C(\hat{K}^N, \sigma_N) - C(\hat{K}^*, 0) \le \rho^N \left(C(\hat{K}^0, \sigma_0) - C(\hat{K}^*, 0) \right)\).
Actually that is not the case for the analysis of the population dynamics of the NPG method.
Notice that the evaluation of the exact natural gradient $\nabla_{\hat{K}} C(\hat{K},\sigma)\chi_{\hat{K}}(\sigma)^{-1}$ does not depend on $\sigma$. This is different from the policy gradient case where $\nabla_{\hat{K}} C(\hat{K},\sigma)$ does depend on $\sigma$. 
Therefore, with a fixed scaled stepsize $\tilde{\eta}$, the control gain iterations in the exact NPG do not depend on the injected noise, and it makes sense to consider a bound which is free of the parameter $\sigma$.
We set $\sigma=0$ in the bound \eqref{eq:mainCon} since the optimal value of $\sigma$ is known to be $0$ and the noise is only injected for the exploration purpose.
We want to emphasize that for the population dynamics of the NPG method, the control gain iterations and the policy evaluations can be decoupled such that $\sigma=0$ is only used in evaluating the performance of $\hat{K}^N$ and does not affect the scaled stepsize $\tilde{\eta}$ which is fixed beforehand.

Now we are ready to present the main proof which aligns closely with the proof of Theorem 2 in \cite{joaoACC}. The proof includes three steps.
\begin{enumerate}
    \item Show that the one-step progress of the natural policy gradient gives a policy still in $\mathcal{K}$.
    \item Apply the almost-smoothness condition and the gradient domination property to show that the cost associated by the one-step progress of the natural policy gradient method decreases as follows
\begin{equation*}
C(\hat{K}^{n+1}, 0) - C(\hat{K}^*, 0) \le \left( 1 - 2\tilde{\eta} \mu  \frac{\sigma_{\min}(\hat{R})}{\| \chi_{\hat{K}^*}(0) \|} \right) \left(C(\hat{K}^n, 0) - C(\hat{K}^*, 0) \right).
\end{equation*}
    \item Use induction to prove the bound \eqref{eq:mainCon}.
\end{enumerate}

For the first step in the above proof sketch, we can use the same Lyapunov argument presented in \cite{joaoACC}. 
Such an argument has also appeared in \cite{zhang2019policyc}.
This is formalized as follows.

\begin{lemma}\label{lemma:step_stable} 
Suppose \(\hat{K}\in\mathcal{K}\).  
Then the one-step update $\hat{K}'$ obtained from the natural policy gradient method~\eqref{eq:npgd} will also be in $\mathcal{K}$ if  the following bound holds
\begin{equation*}
 \tilde{\eta} \leq \frac{1}{2 \| \hat{R} + \gamma \hat{B}^T \EPhatnext \hat{B} \|}.
\end{equation*}
\end{lemma}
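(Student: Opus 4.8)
The plan is to run a Lyapunov-certificate argument: I will show that the solution $\{P_i^{\hat{K}}\}$ of the coupled Lyapunov equations~\eqref{eq:lyap_markov} associated with the old gain $\hat{K}$ remains a valid mean-square-stability certificate for the closed-loop dynamics generated by the one-step update $\hat{K}'$. Since $\hat{K}\in\mathcal{K}$, the family $\{P_i^{\hat{K}}\}$ exists and is positive definite, and I will use the standard fact from MJLS theory \citep{costa2006discrete} that $\hat{K}'\in\mathcal{K}$ is equivalent to the existence of positive definite $\{P_i\}$ satisfying the strict coupled Lyapunov inequality $P_i \succ \gamma(A_i - B_i K_i')^T \EPKnext (A_i - B_i K_i')$ (with $\EPKnext$ replaced by $\mathcal{E}_i(P)$). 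The whole task then reduces to exhibiting one such certificate, and my candidate is $\{P_i^{\hat{K}}\}$ itself.

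First I would rewrite the update in mode-wise form. By Lemma~\ref{lemma:policy_grad} we have $\nabla C(\hat{K})\,\chi_{\hat{K}}^{-1} = 2\bmat{L_1(\hat{K}) & \cdots & L_{n_s}(\hat{K})}$, so the NPG step~\eqref{eq:npgd} becomes $K_i' = K_i - 2\tilde{\eta}\, L_i(\hat{K})$ for each $i\in\Omega$; in particular $\Delta K_i \coloneqq K_i' - K_i = -2\tilde{\eta}\, L_i(\hat{K})$. Next I would substitute $A_i - B_i K_i' = (A_i - B_i K_i) - B_i \Delta K_i$ into the quadratic form $P_i^{\hat{K}} - \gamma(A_i - B_i K_i')^T \EPKnext (A_i - B_i K_i')$, expand, and use~\eqref{eq:lyap_markov} to eliminate the term quadratic in the old closed-loop matrix. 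The key algebraic simplification is the identity $\gamma B_i^T \EPKnext (A_i - B_i K_i) = R_i K_i - L_i(\hat{K})$, immediate from the definition of $L_i(\hat{K})$, which makes all cross terms collapse. The expression then reduces to
\begin{equation*}
P_i^{\hat{K}} - \gamma(A_i - B_i K_i')^T \EPKnext (A_i - B_i K_i') = Q_i + (K_i')^T R_i K_i' + 4\tilde{\eta}\, L_i(\hat{K})^T\!\left[I - \tilde{\eta}\big(R_i + \gamma B_i^T \EPKnext B_i\big)\right] L_i(\hat{K}).
\end{equation*}

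Finally, the stated step-size bound closes the argument. Because $\hat{R}$, $\hat{B}$, and $\EPhatnext$ are block diagonal, $\| \hat{R} + \gamma \hat{B}^T \EPhatnext \hat{B}\| = \max_{i} \|R_i + \gamma B_i^T \EPKnext B_i\|$, so $\tilde{\eta} \le (2\|\hat{R} + \gamma \hat{B}^T \EPhatnext \hat{B}\|)^{-1}$ forces $\tilde{\eta}\big(R_i + \gamma B_i^T \EPKnext B_i\big) \preceq \tfrac12 I \prec I$ for every $i$. Hence the bracketed factor is positive definite, the whole correction term is positive semidefinite, and together with $Q_i \succ 0$ the right-hand side is bounded below by $Q_i \succ 0$. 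This establishes the strict Lyapunov inequality for $\hat{K}'$ certified by $\{P_i^{\hat{K}}\}$, so $\hat{K}'\in\mathcal{K}$.

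I expect the main obstacle to lie not in the algebra, which is routine once the $L_i$-identity is available, but in stating the stability characterization precisely: I must justify that the existence of a positive definite family $\{P_i\}$ satisfying the strict coupled Lyapunov inequality is both necessary and sufficient for mean-square stability of the $\sqrt{\gamma}$-scaled jump system, and that producing such a certificate (rather than re-solving the Lyapunov equation exactly for $\hat{K}'$) already places $\hat{K}'$ in $\mathcal{K}$. I would invoke the standard MJLS spectral/Lyapunov theory for this point; everything else follows directly from the explicit reduction above.
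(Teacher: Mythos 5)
Your proposal is correct and follows essentially the same route as the paper: both use $\{P_i^{\hat{K}}\}$ from the coupled Lyapunov equations of the old gain as a mean-square-stability certificate for the updated closed loop, substitute $K_i' = K_i - 2\tilde{\eta} L_i(\hat{K})$, and use the identity $\gamma B_i^T \mathcal{E}_i(P^{\hat{K}})(A_i - B_i K_i) = R_i K_i - L_i(\hat{K})$ to reduce everything to a quadratic form in $L_i(\hat{K})$ that the step-size bound makes positive semidefinite. The only differences are cosmetic (your sign convention for $\Delta K_i$ and your consolidation of the cross and quadratic terms into the single factor $I - \tilde{\eta}(R_i + \gamma B_i^T \mathcal{E}_i(P^{\hat{K}}) B_i)$, versus the paper's $I - 2\tilde{\eta}(\cdot)$ cross term kept separate), and your algebra checks out.
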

\begin{proof}
The controller $\hat{K}'$ stabilizes the $\sqrt{\gamma}$-scaled system in the mean-square sense if and only if there exists matrices \( \{Y_i\} \succ 0 \) such that
\begin{equation}
\label{eq:LMI}
\gamma (A_i - B_i K_i')^T \left[ \sum_{j \in \Omega} p_{ij} Y_j \right] (A_i - B_i K_i') - Y_i \prec 0, \quad \forall i \in \Omega
\end{equation}
We will show that the above condition can be satisfied by setting $Y_i=P_i$ where $P_i$ solves the MJLS Lyapunov equation $\gamma(A_i - B_i K_i)^T \EPKnext (A_i - B_i K_i) + Q_i + K_i^T R_i K_i = P_i$. 
Notice the existence of $P_i$ is guaranteed by the assumption $\hat{K}\in \mathcal{K}$.
Denote $\deltaK_i:= K_i-K_i'$. 
The Lyapunov equation for $P_i$ can be rewritten as $\gamma (A_i\! -\! B_i K_i'\! -\! B_i \deltaKi )^T \EPKnext (A_i\! -\! B_i K_i'\! -\! B_i \deltaKi)+ Q_i + (K_i' + \deltaKi)^T R_i (K_i' + \deltaKi) = P_i $. 
We can further manipulate this equation as 
\begin{align*}
\gamma(A_i - B_i K_i')^T \EPKnext (A_i - B_i K_i') - P_i =& -\left( Q_i + (K_i')^T R_i K_i'\right) \\
& - \left(\deltaKi^T R_i \deltaKi + \gamma \deltaKi^TB_i^T \EPKnext B_i \deltaKi \right) \\
& - \deltaKi^T \left( R_i K_i' - \gamma B_i^T \EPKnext (A_i - B_i K_i') \right) \\
& - \left( R_i K_i' - \gamma B_i^T \EPKnext (A_i - B_i K_i') \right)^T \deltaKi
\end{align*}
Notice $R_i$, $\EPKnext$, and $Q_i$ are positive definite. Hence the sum of the first two terms on the right hand side are negative definite.
Next, we will show that the last two terms are also negative semidefinite, and this will ensure that $Y_i=P_i$ provides a solution for the inequality condition \eqref{eq:LMI}.
Note that  \(\deltaKi = 2\tilde{\eta} L_i(\hat{K}) \).
 We can make the following calculations:
\begin{align*}
&\deltaKi^T \left( R_i K_i' - \gamma B_i^T \EPKnext (A_i - B_i K_i') \right) \\= & \deltaKi^T \left( (R_i +  \gamma B_i^T \EPKnext B_i) K_i' - \gamma B_i^T \EPKnext A_i \right) \\
= &\deltaKi^T \left( (R_i +  \gamma B_i^T \EPKnext B_i) \left(K_i - \deltaKi \right)  - \gamma B_i^T \EPKnext A_i \right) \\
= &2\tilde{\eta} L_i(\hat{K}) \left( L_i(\hat{K}) - 2\tilde{\eta} \left(R_i + \gamma B_i^T \EPKnext B_i\right) L_i(\hat{K}) \right) \\
=& 2\tilde{\eta} L_i(\hat{K}) \left(I - 2\tilde{\eta} \left(R_i + \gamma B_i^T \EPKnext B_i\right) \right)  L_i(\hat{K})
\end{align*}
Clearly, the above term is guaranteed to be positive semidefinite if $\tilde{\eta}$ satisfies
\begin{align*}
 \tilde{\eta} \leq \frac{1}{2\| R_i + \gamma B_i^T \EPKnext B_i \|}.
\end{align*}
Lastly, notice
$\| R_i + \gamma B_i^T \EPKnext B_i \| \leq \| \hat{R} + \gamma \hat{B}^T \EPhatnext \hat{B} \|$ for all $i$.
This leads to the desired conclusion.
\end{proof}

Several extra helper lemmas for Step 2 are now stated.

\begin{lemma} \label{lemma:bound}
Given the definitions in~\eqref{eq:lyap_markov}, the following holds
\begin{equation*}
\sum_{i\in \Omega}\| P_i^{\hat{K}} \| \leq \frac{C(\hat{K}, 0)}{\mu}.
\end{equation*}
\end{lemma}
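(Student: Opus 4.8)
The strategy is to read off a lower bound on the cost $C(\hat{K},0)$ directly from the closed-form expression \eqref{eq:markov_cost} and match it against $\sum_{i\in\Omega}\|P_i^{\hat{K}}\|$. The key observation is that every quantity appearing in \eqref{eq:markov_cost} is nonnegative, so the cost dominates its purely quadratic part $\Expx{\sum_{i\in\Omega}\rho_i\, x_0^T P_i^{\hat{K}} x_0}$, and this part can in turn be bounded below by $\mu\sum_{i\in\Omega}\|P_i^{\hat{K}}\|$ via standard trace inequalities for positive semidefinite matrices.

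\textbf{Step 1 (positivity).} First I would establish that $P_i^{\hat{K}}\succeq 0$ for every $i$ and that the $z_i$ evaluated at $\sigma=0$ are nonnegative. Since $\hat{K}\in\mathcal{K}$, the $\sqrt{\gamma}$-scaled closed loop is mean-square stable, so \eqref{eq:lyap_markov} has a well-defined solution that can be written as a convergent series of positive semidefinite terms generated by the positive definite right-hand side $Q_i + K_i^T R_i K_i$; hence $P_i^{\hat{K}}\succeq 0$. Consequently $\EPKnext\succeq 0$ and $\tr{\EPKnext}\ge 0$. At $\sigma=0$ the linear system \eqref{eq:z_i} becomes $z_i = \gamma\eps^2\tr{\EPKnext} + \gamma\sum_{j}p_{ij}z_j$, and since its inhomogeneous term is nonnegative while the iteration matrix $\gamma[p_{ij}]$ has spectral radius $\gamma<1$, its unique solution satisfies $z_i\ge 0$.

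\textbf{Step 2 (discard the noise term and apply trace inequalities).} With $z_i\ge 0$, the formula \eqref{eq:markov_cost} gives
\begin{equation*}
C(\hat{K},0) \ge \Expx{\sum_{i\in\Omega}\rho_i\, x_0^T P_i^{\hat{K}} x_0} = \sum_{i\in\Omega}\rho_i\,\tr{P_i^{\hat{K}}\,\Expx{x_0 x_0^T}}.
\end{equation*}
For positive semidefinite $P_i^{\hat{K}}$ and $\Expx{x_0 x_0^T}$, the inequality $\tr{P_i^{\hat{K}}\,\Expx{x_0 x_0^T}} \ge \sigma_{\min}\!\big(\Expx{x_0 x_0^T}\big)\,\tr{P_i^{\hat{K}}}$ holds, since $\Expx{x_0 x_0^T}\succeq \sigma_{\min}\!\big(\Expx{x_0 x_0^T}\big) I$ and $\tr{P_i^{\hat{K}} M}$ is monotone in $M$ over the PSD cone (write $\tr{P_i^{\hat{K}} M}=\tr{(P_i^{\hat{K}})^{1/2} M (P_i^{\hat{K}})^{1/2}}$ and use cyclicity). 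Combining this with $\tr{P_i^{\hat{K}}}\ge \|P_i^{\hat{K}}\|$, which holds for PSD matrices because the spectral norm is the largest eigenvalue, yields $\tr{P_i^{\hat{K}}\,\Expx{x_0 x_0^T}}\ge \sigma_{\min}\!\big(\Expx{x_0 x_0^T}\big)\|P_i^{\hat{K}}\|$. Factoring out $\min_{i}\rho_i$ and recalling $\mu=\min_{i}\rho_i\,\sigma_{\min}\!\big(\Expx{x_0 x_0^T}\big)$ then gives $C(\hat{K},0)\ge \mu\sum_{i\in\Omega}\|P_i^{\hat{K}}\|$, which rearranges to the claim.

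\textbf{Main obstacle.} The computation is short once the positivity facts are in place, so the only real subtlety is Step 1: one must be sure that discarding the $z_i$ terms is legitimate (i.e. $z_i\ge 0$) and that each $P_i^{\hat{K}}$ is genuinely positive semidefinite rather than merely a formal solution of \eqref{eq:lyap_markov}. Both rely on the assumption $\hat{K}\in\mathcal{K}$, which guarantees mean-square stability of the $\sqrt{\gamma}$-scaled closed loop and hence convergence of the associated Neumann-type series; I expect this to be the step deserving the most care, while the trace manipulations in Step 2 are routine.
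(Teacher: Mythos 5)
Your proof is correct and follows essentially the same route as the paper, which simply defers to Lemma 7 of \citet{joaoACC}: lower-bound $C(\hat K,0)$ by its quadratic part and use $\tr{P_i^{\hat K}\,\Expx{x_0x_0^T}}\ge \sigma_{\min}(\Expx{x_0x_0^T})\,\tr{P_i^{\hat K}}\ge \sigma_{\min}(\Expx{x_0x_0^T})\,\|P_i^{\hat K}\|$ together with $\rho_i\ge\min_j\rho_j$. Your Step 1 (verifying $P_i^{\hat K}\succeq 0$ and $z_i\ge 0$ at $\sigma=0$, so the noise terms can be discarded) is exactly the small adaptation needed for this paper's discounted, noisy setting that the authors leave implicit.
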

\begin{proof}
The proof is almost identical to the proof of Lemma 7 in \citet{joaoACC}.
\end{proof}

\begin{lemma}[``Almost smoothness'']
\label{lemma:smooth}
Suppose $\hat{K}\in \mathcal{K}$ and $\hat{K}'\in \mathcal{K}$. For any fixed $\sigma$,
the cost function \(C(\hat{K}, \sigma)\)  satisfies
\begin{equation*}
C(\hat{K}', \sigma) - C(\hat{K}, \sigma) = -2\tr{\chi_{\hat{K}'}(\sigma) \Delta \hat{K}^T \hat{L}_{\hat{K}}} + \tr{\chi_{\hat{K}'}(\sigma) \Delta \hat{K}^T \left(\hat{R} + \gamma \hat{B}^T \EPhatnext \hat{B}\right)\Delta \hat{K}}
\end{equation*}
\begin{align*}
\text{where }\quad \Delta \hat{K} &= \diag{(K_1 - K_1'), \ldots, (K_{n_s} - K_{n_s}')}, \hat{L}_{\hat{K}} = \diag{L_1(\hat{K}), \ldots, L_{n_s}(\hat{K})}, \\
\EPhatnext &= \diag{\mathcal{E}_1(P^{\hat{K}}), \ldots, \mathcal{E}_{n_s}(P^{\hat{K}})}.
\end{align*}
\end{lemma}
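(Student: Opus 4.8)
The plan is to derive an exact cost-difference (advantage) identity between the two feasible policies and then recast it in the stated trace form. Introduce the value function of $\hat{K}$, namely $V_{\hat{K}}(x,i) := x^T P_i^{\hat{K}} x + z_i^{\hat{K}}$, where $P_i^{\hat{K}}$ solves \eqref{eq:lyap_markov} and $z_i^{\hat{K}}$ solves \eqref{eq:z_i}; by \eqref{eq:markov_cost} we have $C(\hat{K},\sigma)=\Expx{\sum_{i\in\Omega}\rho_i V_{\hat{K}}(x_0,i)}$. I would run the closed loop under the competing policy $\hat{K}'$ at the \emph{same} noise level $\sigma$ and telescope $V_{\hat{K}}$ along that trajectory: since $\hat{K}'\in\mathcal{K}$ ensures mean-square stability of the $\sqrt{\gamma}$-scaled dynamics, $\gamma^t\Exp{V_{\hat{K}}(x_t,\omega_t)}\to 0$, so $V_{\hat{K}}(x_0,\omega_0)=\sum_{t}\gamma^t\big(V_{\hat{K}}(x_t,\omega_t)-\gamma V_{\hat{K}}(x_{t+1},\omega_{t+1})\big)$. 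Subtracting the two cost expressions then yields
\begin{equation*}
C(\hat{K}',\sigma)-C(\hat{K},\sigma)=\Exp{\sumtinf \gamma^t\Big(x_t^T Q_{\omega_t}x_t+u_t^T R_{\omega_t}u_t+\gamma V_{\hat{K}}(x_{t+1},\omega_{t+1})-V_{\hat{K}}(x_t,\omega_t)\Big)},
\end{equation*}
where the expectation is over the $\hat{K}'$-trajectory with $u_t\sim\N(-K'_{\omega_t}x_t,\sigma^2 I)$.

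Next I would evaluate the per-step term in conditional expectation given $(x_t,\omega_t=i)$, averaging over the action noise $w_t$, the process noise $e_t$, and the next mode $\omega_{t+1}$. Writing $\Delta K_i=K_i-K_i'$ and $E=\mathcal{E}_i(P^{\hat{K}})$, the state-independent part collects to $\sigma^2\tr{R_i+\gamma B_i^T E B_i}+\gamma\eps^2\tr{E}+\gamma\sum_j p_{ij}z_j^{\hat{K}}-z_i^{\hat{K}}$, which vanishes identically by the defining equation \eqref{eq:z_i} for $z_i^{\hat{K}}$ — this is precisely why the $\sigma^2$ and $\eps^2$ contributions cancel and the final identity carries no explicit noise constants. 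The remaining quadratic-in-$x_t$ part equals $x_t^T\big(Q_i+(K_i')^T R_i K_i'+\gamma(A_i-B_iK_i')^T E(A_i-B_iK_i')-P_i^{\hat{K}}\big)x_t$; substituting the Lyapunov equation \eqref{eq:lyap_markov} for $P_i^{\hat{K}}$ and grouping the terms in $\Delta K_i$ produces $x_t^T\big(-\Delta K_i^T L_i(\hat{K})-L_i(\hat{K})^T\Delta K_i+\Delta K_i^T(R_i+\gamma B_i^T E B_i)\Delta K_i\big)x_t$, with $L_i(\hat{K})$ as in Lemma \ref{lemma:policy_grad}.

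Finally I would convert to trace form using $X_i(t)=\Exp{x_t x_t^T\1{\omega_t=i}}$ under $\hat{K}'$, so that $\Exp{x_t^T M_{\omega_t}x_t}=\sum_i\tr{M_i X_i(t)}$; summing over $t$ with the discount and recalling $\chi_{\hat{K}'}(\sigma)=\sumtinf \gamma^t\diag{X_1(t),\ldots,X_{n_s}(t)}$ from \eqref{eq:chidef} (which already absorbs the $\sigma^2 B_iB_i^T+\eps^2 I$ injection via \eqref{eq:X_i}) assembles the block-diagonal objects $\Delta\hat{K}$, $\hat{L}_{\hat{K}}$, and $\EPhatnext$. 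Using symmetry of each $X_i(t)$ together with cyclic and transpose invariance of the trace collapses the two linear pieces into $-2\tr{\chi_{\hat{K}'}(\sigma)\Delta\hat{K}^T\hat{L}_{\hat{K}}}$, delivering the claimed identity. The main obstacle I anticipate is bookkeeping rather than conceptual: cleanly separating the state-dependent and constant contributions so that the $z_i^{\hat{K}}$ cancellation is transparent, and rigorously justifying the interchange of expectation, the infinite discounted sum, and the telescoping limit — all of which rest on the mean-square stability guaranteed by $\hat{K},\hat{K}'\in\mathcal{K}$.
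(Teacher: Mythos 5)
Your proposal is correct and follows essentially the same route as the paper: the paper writes $C(\hat{K}',\sigma)-C(\hat{K},\sigma)$ via $\sum_i\tr{(P_i^{\hat{K}'}-P_i^{\hat{K}})X_i'(t)}$ plus the $z_i$ differences and iterates a one-step recursion in $t$, which is exactly the unrolled form of your value-function telescoping along the $\hat{K}'$ trajectory. The central algebra is identical in both: completing the square in $\Delta K_i$ around $L_i(\hat{K})$ using the Lyapunov equation \eqref{eq:lyap_markov}, with the constant terms cancelling via \eqref{eq:z_i}, then summing the discounted traces against $X_i(t)$ to assemble $\chi_{\hat{K}'}(\sigma)$.
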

\begin{proof}
The proof is quite similar to the proof of Lemma 5 in \citet{joaoACC}. We add a few more details here.
To simplify the equations, we use \( \phi_i = A_i - B_i K_i \) and \( \phi_i' \coloneqq A_i - B_i K_i' \).
Recall that
\begin{equation} 
C(\hat K, \sigma) = \Expx{ \sum_{i\in\Omega } \rho_i \left(x_0^T  P_i^{\hat{K}} x_0 + z_i^{\hat{K}}\right)},
\end{equation}
where \( z_i^{\hat{K}}\) is calculated from \eqref{eq:z_i}.
Using the cost function definition, we have
\begin{align}
\label{eq:Cdiff}
\begin{split}
C(\hat{K}', \sigma') - C(\hat{K}, \sigma)
	=\sum_{i\in\Omega} \tr{(P^{\hat{K}'}_i - P^{\hat{K}}_i) X_i'(0)} + \sum_{i\in\Omega } \1{\omega_0 = i}(z_i^{\hat{K}'} - z_i^{\hat{K}})
\end{split}
\end{align}
where $X_i'(t)=\Exp{x_t x_t^T \1{\omega_t = i}}$ where $x_t$ is generated under the policy $\hat{K}'$.
Now we develop a formula for $(P_i^{\hat{K}'} - P_i^{\hat{K}})$.
Based on (\ref{eq:lyap_markov}), we have $
P^{\hat{K}'}_i = \gamma (\phi_i')^T \Enext{P^{\hat{K}'}} \phi_i' + Q_i + (K_i')^T R_i K_i'$.  Using this, we can directly show
\begin{align*}
P_i^{\hat{K}'} - P_i^{\hat{K}} &= \gamma (\phi_i')^T \Enext{P^{\hat{K}'}} \phi_i' + Q_i + (K_i')^T R_i K_i' - P_i^{\hat{K}} \\
 &= \gamma (\phi_i')^T \left( \Enext{P^{\hat{K}'}} - \EPKnext \right) \phi_i' + (K_i - K_i')^T (R_i + \gamma B_i^T \EPKnext B_i)(K_i - K_i')\\
&  \quad - (K_i - K_i')^T\left(R_i K_i - \gamma B_i^T \EPKnext \phi_i\right) - \left(R_i K_i - \gamma B_i^T \EPKnext \phi_i\right)^T(K_i - K_i') 
\end{align*}
Now we can substitute the above formula into \eqref{eq:Cdiff} and  show
\begin{align*}
&\sum_{i\in\Omega} \tr{(P^{\hat{K}'}_i - P^{\hat{K}}_i) X_i'(0)} + \sum_{i\in\Omega } \1{\omega_0 = i}(z_i^{\hat{K}'} - z_i^{\hat{K}})\\=&-2\tr{\diag{X_i(0)} \Delta \hat{K}^T \hat{L}_{\hat{K}}} + \tr{\diag{X_i(0)} \Delta \hat{K}^T \left(\hat{R} + \gamma \hat{B}^T \EPhatnext \hat{B}\right)\Delta \hat{K}}\\
&+ \gamma\left(\sum_{i\in\Omega} \tr{(P^{\hat{K}'}_i - P^{\hat{K}}_i) X_i'(1)} + \sum_{i\in\Omega } \1{\omega_1 = i}(z_i^{\hat{K}'} - z_i^{\hat{K}})\right)
\end{align*}
Actually it is straightforward to extend the above formula for any $t$:
\begin{align*}
&\sum_{i\in\Omega} \tr{(P^{\hat{K}'}_i - P^{\hat{K}}_i) X_i'(t)} + \sum_{i\in\Omega } \1{\omega_t = i}(z_i^{\hat{K}'} - z_i^{\hat{K}})\\=&-2\tr{\diag{X_i(t)} \Delta \hat{K}^T \hat{L}_{\hat{K}}} + \tr{\diag{X_i(t)} \Delta \hat{K}^T \left(\hat{R} + \gamma \hat{B}^T \EPhatnext \hat{B}\right)\Delta \hat{K}}\\
&+ \gamma\left(\sum_{i\in\Omega} \tr{(P^{\hat{K}'}_i - P^{\hat{K}}_i) X_i'(t+1)} + \sum_{i\in\Omega } \1{\omega_{t+1} = i}(z_i^{\hat{K}'} - z_i^{\hat{K}})\right)
\end{align*}
Therefore, we can iterate the above formula from $t=0$ to $\infty$ to obtain the desired conclusion.
\end{proof}

\begin{lemma}
[Natural Gradient Domination]
\label{lemma:grad_dom}
Suppose  \(\hat{K}\in\mathcal{K}\). Let \(\hat{K}^*\) be the optimal policy. Then 
\(C(\hat{K}, 0) - C(\hat{K}^*, 0) \leq \frac{\| \chi_{\hat{K}^*}(0) \|}{\sigma_{\min}(\hat{R})} \tr{\hat{L}_{\hat{K}}^T \hat{L}_{\hat{K}}}\).
\end{lemma}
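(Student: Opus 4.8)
The plan is to derive the bound from the almost-smoothness identity (Lemma~\ref{lemma:smooth}) by a completing-the-square argument. First I would instantiate Lemma~\ref{lemma:smooth} with the primed policy taken to be the optimal gain $\hat{K}^*$ and the unprimed policy taken to be the current $\hat{K}$, evaluated at $\sigma=0$. Since $\chi_{\hat{K}^*}(0)$ is block diagonal with blocks $X_i^*\coloneqq\sum_t\gamma^t X_i(t)$ (the state second moments generated under $\hat{K}^*$), and since $\Delta\hat{K}$, $\hat{L}_{\hat{K}}$, and $M\coloneqq\hat{R}+\gamma\hat{B}^T\EPhatnext\hat{B}$ are all block diagonal, rearranging the identity and splitting the trace across modes gives
\[
C(\hat{K},0)-C(\hat{K}^*,0)=\sum_{i\in\Omega}\left(2\tr{X_i^*\,\Delta K_i^T\,L_i(\hat{K})}-\tr{X_i^*\,\Delta K_i^T\,M_i\,\Delta K_i}\right),
\]
where $\Delta K_i=K_i-K_i^*$ and $M_i=R_i+\gamma B_i^T\EPKnext B_i$ is the $i$-th diagonal block of $M$.

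Second, I would upper-bound each summand by maximizing the quadratic over \emph{all} free $\Delta K_i$, which is legitimate since the actual $\Delta K_i$ is only one feasible point. Completing the square per block yields
\[
2\tr{X_i^*\,\Delta K_i^T L_i}-\tr{X_i^*\,\Delta K_i^T M_i\,\Delta K_i}=\tr{X_i^* L_i^T M_i^{-1}L_i}-\tr{X_i^*\,(\Delta K_i-M_i^{-1}L_i)^T M_i\,(\Delta K_i-M_i^{-1}L_i)}.
\]
Because $X_i^*\succeq0$ and $M_i\succ0$, the subtracted term is nonnegative, so each summand is at most $\tr{X_i^* L_i^T M_i^{-1}L_i}$. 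Phrasing the argument this way avoids any invertibility assumption on $X_i^*$.

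Third, I would convert this into the stated bound via two positive-semidefinite estimates. Since $M_i=R_i+\gamma B_i^T\EPKnext B_i\succeq R_i$, we have $\sigma_{\min}(M_i)\ge\sigma_{\min}(R_i)\ge\sigma_{\min}(\hat{R})$ and hence $M_i^{-1}\preceq\sigma_{\min}(\hat{R})^{-1}I$; and since $X_i^*$ is a diagonal block of $\chi_{\hat{K}^*}(0)$ we have $\|X_i^*\|\le\|\chi_{\hat{K}^*}(0)\|$. Writing $\tr{X_i^* L_i^T M_i^{-1}L_i}=\tr{M_i^{-1}(L_i X_i^* L_i^T)}$ and applying $\tr{AB}\le\|A\|\tr{B}$ for $A,B\succeq0$ twice (first with $A=M_i^{-1}$, then with $A=X_i^*$) gives $\tr{X_i^* L_i^T M_i^{-1}L_i}\le\frac{\|\chi_{\hat{K}^*}(0)\|}{\sigma_{\min}(\hat{R})}\tr{L_i^T L_i}$. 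Summing over $i\in\Omega$ and using $\sum_i\tr{L_i^T L_i}=\tr{\hat{L}_{\hat{K}}^T\hat{L}_{\hat{K}}}$ (block-diagonal structure of $\hat{L}_{\hat{K}}$) yields the claim.

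The main obstacle will be the completing-the-square step in matrix/block form: keeping the trace manipulations and the symmetry of $X_i^*$ straight so that the two cross terms combine into $2\tr{X_i^*\Delta K_i^T L_i}$, and being careful to state the free-maximization step as an inequality rather than an equality (the actual $\Delta K_i$ need not equal the maximizer $M_i^{-1}L_i$). The remaining manipulations are routine PSD trace inequalities.
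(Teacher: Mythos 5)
Your proposal is correct and follows essentially the same route as the paper, which simply defers to Lemma~6 of \citet{joaoACC}: that argument is precisely the one you spell out, namely instantiating the almost-smoothness identity (Lemma~\ref{lemma:smooth}) at $\hat{K}'=\hat{K}^*$, $\sigma=0$, completing the square in $\Delta K_i$ blockwise, dropping the nonnegative remainder, and finishing with the PSD trace bounds $M_i^{-1}\preceq\sigma_{\min}(\hat{R})^{-1}I$ and $\|X_i^*\|\le\|\chi_{\hat{K}^*}(0)\|$. Your explicit care about the free-maximization step being an inequality and about avoiding any invertibility assumption on $X_i^*$ is a welcome addition of detail over the paper's citation-only proof.
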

\begin{proof}
The proof is almost identical to the proof of Lemma 6 in \cite{joaoACC}. Just notice that the almost smoothness property holds for $\sigma=0$, and the coefficient $\chi_{\hat{K}^*}$ in the above bound is evaluated at $\sigma=0$.
\end{proof}

Now we can analyze the one-step progress.

\begin{lemma}\label{lemma:one_step}
Suppose $\hat{K}\in \mathcal{K}$, and $\mu>0$.
If $\hat{K}' = \hat{K} - \tilde{\eta} \nabla_{\hat{K}} C(\hat{K},\sigma)\chi_{\hat{K}}(\sigma)^{-1}$ with a scaled stepsize satisfying $0\le  \tilde{\eta} \le \frac{1}{2\| \hat{R} + \gamma \hat{B}^T \EPhatnext \hat{B} \|}$,
then the following inequality holds
\begin{align*}
C(\hat{K}', 0) - C(\hat{K}^*, 0) &\leq \left( 1 - 2\tilde{\eta} \mu\frac{  \sigma_{\min}(\hat{R})}{\| \chi_{\hat{K}^*}(0) \|} \right) \left( C(\hat{K}, 0) - C(\hat{K}^*, 0) \right).
\end{align*}
\end{lemma}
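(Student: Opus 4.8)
The plan is to chain together the technical lemmas just established, following the one-step analysis of \cite{joaoACC} but carried out for the discounted cost and evaluated at $\sigma=0$. First I would make the natural gradient direction explicit. Using the gradient formula \eqref{eq:exact_grad} of Lemma \ref{lemma:policy_grad}, namely $\nabla_{\hat{K}}C(\hat{K},\sigma)=2\bmat{L_1(\hat{K}) & \cdots & L_{n_s}(\hat{K})}\chi_{\hat{K}}(\sigma)$, the factor $\chi_{\hat{K}}(\sigma)$ cancels against $\chi_{\hat{K}}(\sigma)^{-1}$ in the update, so $K_i'=K_i-2\tilde{\eta}L_i(\hat{K})$; equivalently $\Delta\hat{K}=2\tilde{\eta}\,\hat{L}_{\hat{K}}$ in the block-diagonal notation of Lemma \ref{lemma:smooth}. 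This confirms that the update direction is independent of $\sigma$, which is exactly what lets us evaluate the cost gap at $\sigma=0$. Since the stepsize hypothesis matches the bound in Lemma \ref{lemma:step_stable}, that lemma guarantees $\hat{K}'\in\mathcal{K}$, so $C(\hat{K}',0)$ is finite and the almost-smoothness identity applies.

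Next I would substitute $\Delta\hat{K}=2\tilde{\eta}\hat{L}_{\hat{K}}$ into the almost-smoothness identity of Lemma \ref{lemma:smooth} (with $\sigma=0$), giving
\[
C(\hat{K}',0)-C(\hat{K},0)=-4\tilde{\eta}\tr{\chi_{\hat{K}'}(0)\hat{L}_{\hat{K}}^T\hat{L}_{\hat{K}}}+4\tilde{\eta}^2\tr{\chi_{\hat{K}'}(0)\hat{L}_{\hat{K}}^T(\hat{R}+\gamma\hat{B}^T\EPhatnext\hat{B})\hat{L}_{\hat{K}}}.
\]
To control the quadratic term I would use $\hat{R}+\gamma\hat{B}^T\EPhatnext\hat{B}\preceq\|\hat{R}+\gamma\hat{B}^T\EPhatnext\hat{B}\|\,I$ together with the fact that $\tr{AB}\le\tr{AC}$ whenever $A\succeq0$ and $B\preceq C$ (here $A=\chi_{\hat{K}'}(0)\succeq0$). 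The stepsize bound $\tilde{\eta}\|\hat{R}+\gamma\hat{B}^T\EPhatnext\hat{B}\|\le\tfrac12$ then collapses the two terms into $C(\hat{K}',0)-C(\hat{K},0)\le-2\tilde{\eta}\tr{\chi_{\hat{K}'}(0)\hat{L}_{\hat{K}}^T\hat{L}_{\hat{K}}}$.

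I would then lower-bound $\chi_{\hat{K}'}(0)$. Keeping only the $t=0$ term of the series \eqref{eq:chidef} and discarding the remaining PSD terms gives $\chi_{\hat{K}'}(0)\succeq\diag{X_1(0),\ldots,X_{n_s}(0)}$ with $X_i(0)=\rho_i\Expx{x_0x_0^T}$, so each block dominates $\mu I$ and hence $\chi_{\hat{K}'}(0)\succeq\mu I$. This yields $C(\hat{K}',0)-C(\hat{K},0)\le-2\tilde{\eta}\mu\tr{\hat{L}_{\hat{K}}^T\hat{L}_{\hat{K}}}$. Finally I would invoke the natural gradient domination of Lemma \ref{lemma:grad_dom}, rearranged as $\tr{\hat{L}_{\hat{K}}^T\hat{L}_{\hat{K}}}\ge\frac{\sigma_{\min}(\hat{R})}{\|\chi_{\hat{K}^*}(0)\|}\big(C(\hat{K},0)-C(\hat{K}^*,0)\big)$, and write $C(\hat{K}',0)-C(\hat{K}^*,0)$ as $[C(\hat{K}',0)-C(\hat{K},0)]+[C(\hat{K},0)-C(\hat{K}^*,0)]$ to obtain the claimed contraction.

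The computations are routine once the lemmas are in hand; the one place that needs care is the passage from the almost-smoothness identity to the clean descent inequality. Specifically, one must verify that the scaled stepsize genuinely makes the positive quadratic term no larger than half the magnitude of the negative linear term, which hinges on correctly pairing the operator-norm bound on $\hat{R}+\gamma\hat{B}^T\EPhatnext\hat{B}$ with the PSD-monotonicity of the trace against $\chi_{\hat{K}'}(0)$. A secondary subtlety worth stating explicitly is that it is $\chi_{\hat{K}'}(0)$, the covariance under the \emph{updated} policy, that appears rather than $\chi_{\hat{K}}(0)$; the $\mu I$ lower bound must therefore be applied to $\chi_{\hat{K}'}(0)$, which is legitimate because the $t=0$ initial-covariance term is identical for every feasible policy.
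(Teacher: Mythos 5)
Your proof is correct and follows exactly the route the paper intends: the paper defers to Lemma 9 of \citet{joaoACC}, whose argument is precisely this chain (feasibility via Lemma \ref{lemma:step_stable}, substitution of $\Delta\hat{K}=2\tilde{\eta}\hat{L}_{\hat{K}}$ into Lemma \ref{lemma:smooth} at $\sigma=0$, absorbing the quadratic term via the stepsize bound, lower-bounding $\chi_{\hat{K}'}(0)\succeq\mu I$ through the policy-independent $t=0$ term, and closing with Lemma \ref{lemma:grad_dom}). Your write-up simply makes explicit the steps the paper leaves to the citation.
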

\begin{proof}
The proof is identical to the proof of Lemma 9 in \cite{joaoACC}.
\end{proof}

Finally, in Step 3, induction can be used to prove Theorem~\ref{thm:conv}. This is identical to the induction proof presented in the end of \cite{joaoACC}. We omit the details here.

\section{More Discussions on Algorithm 1}
Here we provide more details on the equations used in Algorithm~\ref{alg:alg1}.
Recall that we sample our input actions from a Gaussian distribution as $u_t\sim \N(-K_{\omega_t} x_t, \sigma^2 I)$. 
Then our likelihood of choosing a given action is
\begin{equation}
\pi_\theta(u_t | x_t, \omega_t) = \frac{1}{\sigma^k \sqrt{(2 \pi)^{k}}} \exp \left( -\frac{1}{2 \sigma^2}  (u_t + K_{\omega_t} x_t)^T (u_t + K_{\omega_t} x_t) \right),
\end{equation}
and it follows that
\begin{equation}\label{eq:log_like}
\log \pi_\theta(u_t | x_t, \omega_t) = -\frac{k}{2}\log 2\pi -k \log \sigma - \frac{1}{2\sigma^2} (u_t + K_{\omega_t} x_t)^T (u_t + K_{\omega_t} x_t).
\end{equation}

The derivative of the log-likelihood function with respect to control parameters at mode $i$ is
\begin{equation}
\frac{\partial}{\partial K_i} \log \pi_\theta(u_t | x_t, \omega_t) = -\frac{1}{\sigma^2}  (u_t + K_{\omega_t} x_t) x_t^T \1{\omega_t = i}.
\end{equation}
Due to the shape of $\hat{K}$, we can also write the above equation using basis vectors and the Kronecker product, which leads to the formula for $\hat{G}_t$ in Algorithm 1.
We can obtain $\hat{S}_t$ by taking the derivative of the log-likelihood with respect to $\sigma$.

\section{Implementation}

The code used to generate the simulations in Section~\ref{sec:DataNPG} are available at \url{https://github.com/jpjporto/MJLS_Learning}

\end{document}